\theoremstyle{plain}
\newtheorem{theorem}{Theorem}[section]
\newtheorem{lemma}[theorem]{Lemma}
\newtheorem{proposition}[theorem]{Proposition}
\theoremstyle{definition}
\newtheorem{remark}[theorem]{Remark}
\newtheorem{definition}[theorem]{Definition}
\newcommand{\FLOOR}[1]{\left\lfloor #1 \right\rfloor}
\newcommand{\CEIL}[1]{\left\lceil #1 \right\rceil}
\newcommand{\FJADEF}[3]{{#1}:{#2}\to{#3}}
\newcommand{\Mod}[1]{\ (\mathrm{mod}\ #1)}
\numberwithin{equation}{section}
\definecolor{Maroon}{RGB}{140,10,0}
\title{Predators and altruists arriving on jammed Riviera}
\author[T.\ Do\v{s}li\'{c}]{Tomislav\ Do\v{s}li\'{c}}
\address[Tomislav\ Do\v{s}li\'{c}]{Department of Mathematics\\
	Faculty of Civil Engineering\\
	University of Zagreb\\
	Zagreb\\
	Croatia \\ and
Faculty of Information Studies \\
Novo Mesto \\
Slovenia}
\email{tomislav.doslic@grad.unizg.hr}
\author[M.\ Puljiz]{Mate\ Puljiz}
\address[Mate Puljiz]{Department of Applied Mathematics\\
	Faculty of Electrical Engineering and Computing\\
	University of Zagreb\\
 Zagreb\\
	Croatia}
\email{mate.puljiz@fer.hr}
\author[S.\ \v{S}ebek]{Stjepan\ \v{S}ebek}
\address[Stjepan\ \v{S}ebek]{Department of Applied Mathematics\\
	Faculty of Electrical Engineering and Computing\\
	University of Zagreb\\
 Zagreb\\
	Croatia}
\email{stjepan.sebek@fer.hr}
\author[J.\ \v{Z}ubrini\'{c}]{Josip\ \v{Z}ubrini\'{c}}
\address[Josip\ \v{Z}ubrini\'{c}]{Department of Applied Mathematics\\
	Faculty of Electrical Engineering and Computing\\
	University of Zagreb\\
 Zagreb\\
	Croatia}
\email{josip.zubrinic@fer.hr}
\subjclass[2020]{
	05B40, % Combinatorial aspects of packing and covering
	05A15,  % Exact enumeration problems, generating functions
	05A16,  % Asymptotic enumeration
    82B20, % Lattice systems (Ising, dimer, Potts, etc.) and systems on graphs arising in equilibrium statistical mechanics
	00A67}  % Mathematics and architecture
\keywords{generating functions, complexity function, configurational entropy, jammed configuration, maximal packing, settlement model, equilibrium lattice systems}
\begin{document}

\begin{abstract}
    The Riviera model is a combinatorial model for a settlement along a coastline, introduced recently by the authors. Of most interest are the so-called jammed states, where no more houses can be built without violating the condition that every house needs to have free space to at least one of its sides. In this paper, we introduce new agents (predators and altruists) that want to build houses once the settlement is already in the jammed state. Their behavior is governed by a different set of rules, and this allows them to build new houses even though the settlement is jammed. Our main focus is to detect jammed configurations that are resistant to predators, to altruists, and to both predators and altruists. We provide bivariate generating functions, and complexity functions (configurational entropies) for such jammed configurations. We also discuss this problem in the two-dimensional setting of a combinatorial settlement planning model that was also recently introduced by the authors, and of which the Riviera model is just a special case.
\end{abstract}

\maketitle

%
%
% ------------------------------ INTRODUCTION ---------------------------------------
%
%

\section{Introduction}
In this paper, we expand the Riviera model introduced by the authors in \cite{DPSZ}. The Riviera model is a one-dimensional variant of a two-dimensional irreversible deposition model introduced in \cite{PSZ-21, PSZ-21-2}. In the original two-dimensional model, a rectangular $m \times n$ tract of land is considered. The sides of that tract of land are oriented north-south and east-west, and it consists of $mn$ square lots of size $1 \times 1$ (see Figure \ref{fig:tract_of_land}).
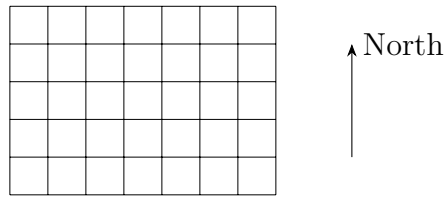
\begin{figure}[h]
	\centering
	\begin{tikzpicture}[scale = 0.5]
	\draw[step=1cm,black,very thin] (0, 0) grid (7,5);
	\draw [-Stealth] (9,1) -- (9,4);
	\node[anchor=west] at (9,4) {North};
	\end{tikzpicture}
	\caption{An example of a tract of land ($m = 5$, $n = 7$).}
	\label{fig:tract_of_land}
\end{figure}
Each $1 \times 1$ square lot can be either empty, or occupied by a single house. A house is said to be blocked from sunlight if the three lots immediately to its east, west and south are all occupied (it is assumed that sunlight always comes from the south). For the tracts of land along the eastern, western, and southern boundary of the rectangular $m \times n$ grid, there are no obstructions to sunlight. We refer to the models of such rectangular tracts of land, with certain lots occupied, as configurations. Of interest are the maximal (also referred to as jammed) configurations, where no house is blocked from the sunlight, and any further addition of a house to the configuration on any empty lot would result in either that house being blocked from the sunlight, or it would cut off sunlight from some previously built house, or both.

We encode any fixed configuration as a $0-1$, $m \times n$ matrix $C$, with $C_{i,j}=1$ if and only if a house is built on the lot $(i,j)$ ($i$-th row and $j$-th column, counted from the top left corner). It is natural to define the \emph{building density} $\rho$ of a configuration $C$ as $\rho = \dfrac{|C|}{mn}$, where
$$|C| = \sum_{i=1}^m\sum_{j=1}^n C_{i,j}$$
is the total number of occupied lots in the configuration $C$. We also refer to $|C|$ as the \emph{occupancy} of $C$.

A configuration $C$ is said to be \emph{permissible} if no house in it is blocked from the sunlight, otherwise it is called \emph{impermissible}.

A configuration $C$ is said to be \emph{maximal} (\emph{jammed}) if it is permissible and no other permissible configuration strictly contains it, i.e.\ no further houses can be added to it, whilst ensuring that all the houses still get some sunlight. See Figure \ref{fig:examples} for examples of impermissible, permissible, and maximal configurations on a $5 \times 4$ tract of land. In this figure (and all the following figures) shaded squares represent houses and unshaded squares represent empty lots on the tract of land.

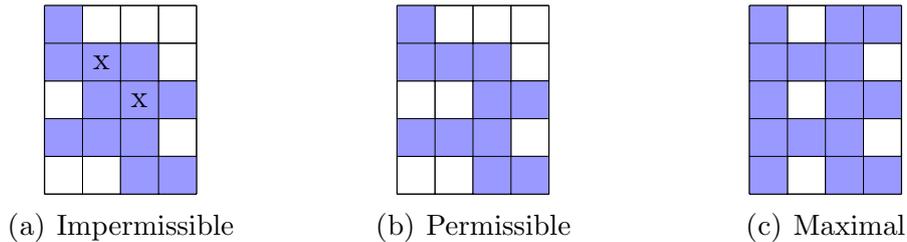
\begin{figure}
	\begin{subfigure}{0.3\textwidth}\centering
		\begin{tikzpicture}[scale = 0.5]
		\draw[step=1cm,black,very thin] (0, 0) grid (4,5);
		\fill[blue!40!white] (0,1) rectangle (1,2);
		\fill[blue!40!white] (0,3) rectangle (1,4);
		\fill[blue!40!white] (0,4) rectangle (1,5);
		\fill[blue!40!white] (1,1) rectangle (2,2);
		\fill[blue!40!white] (1,2) rectangle (2,3);
		\fill[blue!40!white] (1,3) rectangle (2,4);
		\node[] at (1.5,3.5) {x};
		\fill[blue!40!white] (2,0) rectangle (3,1);
		\fill[blue!40!white] (2,1) rectangle (3,2);
		\fill[blue!40!white] (2,2) rectangle (3,3);
		\node[] at (2.5,2.5) {x};
		\fill[blue!40!white] (2,3) rectangle (3,4);
		\fill[blue!40!white] (3,0) rectangle (4,1);
		\fill[blue!40!white] (3,2) rectangle (4,3);
		\draw[step=1cm,black,very thin] (0, 0) grid (4,5);
		\end{tikzpicture}
		\caption{Impermissible}
	\end{subfigure}
	\begin{subfigure}{0.3\textwidth}\centering
		\begin{tikzpicture}[scale = 0.5]
		\draw[step=1cm,black,very thin] (0, 0) grid (4,5);
		\fill[blue!40!white] (0,1) rectangle (1,2);
		\fill[blue!40!white] (0,3) rectangle (1,4);
		\fill[blue!40!white] (0,4) rectangle (1,5);
		\fill[blue!40!white] (1,1) rectangle (2,2);
		\fill[blue!40!white] (1,3) rectangle (2,4);
		\fill[blue!40!white] (2,0) rectangle (3,1);
		\fill[blue!40!white] (2,1) rectangle (3,2);
		\fill[blue!40!white] (2,2) rectangle (3,3);
		\fill[blue!40!white] (2,3) rectangle (3,4);
		\fill[blue!40!white] (3,0) rectangle (4,1);
		\fill[blue!40!white] (3,2) rectangle (4,3);
		\draw[step=1cm,black,very thin] (0, 0) grid (4,5);
		\end{tikzpicture}
		\caption{Permissible}
	\end{subfigure}
	\begin{subfigure}{0.3\textwidth}\centering
		\begin{tikzpicture}[scale = 0.5]
		\draw[step=1cm,black,very thin] (0, 0) grid (4,5);
		\fill[blue!40!white] (0,0) rectangle (1,1);
		\fill[blue!40!white] (0,1) rectangle (1,2);
		\fill[blue!40!white] (0,2) rectangle (1,3);
		\fill[blue!40!white] (0,3) rectangle (1,4);
		\fill[blue!40!white] (0,4) rectangle (1,5);
		\fill[blue!40!white] (1,1) rectangle (2,2);
		\fill[blue!40!white] (1,3) rectangle (2,4);
		\fill[blue!40!white] (2,0) rectangle (3,1);
		\fill[blue!40!white] (2,1) rectangle (3,2);
		\fill[blue!40!white] (2,2) rectangle (3,3);
		\fill[blue!40!white] (2,3) rectangle (3,4);
		\fill[blue!40!white] (2,4) rectangle (3,5);
		\fill[blue!40!white] (3,0) rectangle (4,1);
		\fill[blue!40!white] (3,2) rectangle (4,3);
		\fill[blue!40!white] (3,4) rectangle (4,5);
		\draw[step=1cm,black,very thin] (0, 0) grid (4,5);
		\end{tikzpicture}
		\caption{Maximal}
	\end{subfigure}
	\caption{Examples of impermissible, permissible and maximal configuration on a $5 \times 4$ tract of land. The houses that are blocked from the sunlight are marked with `x'.}\label{fig:examples}
\end{figure}

We were introduced to this problem by Juraj Bo\v{z}i\'{c} who came up with it during his studies at the Faculty of Architecture, University of Zagreb. His main goal was to design a model for settlement planning where the impact of architects, urbanists, and other regulators would be as small as possible, and people would have a lot of freedom in the process of building the settlement. This minimal intervention from the side of the regulator is given through the condition that houses are not allowed to be blocked from the sunlight, and that the tracts of land on which the settlements are built are of rectangular shapes.

The Riviera model is a one-dimensional modification of the settlement planning model described above, which
ignores the possibility of obtaining sunlight from the south, but instead retains only the constraints pertaining to the east and west directions. As this is a model on a strip of land, it resembles a Mediterranean settlement along the coast (riviera), hence the name. The configuration of built houses is represented with a row vector $C = (c_k)$, where $c_k = 1$ if the lot $k$ is occupied and $c_k = 0$ otherwise. We write configurations as strings of $0$'s and $1$'s, and we refer to any consecutive sequence of letters in a configuration as a substring or a (sub)word in that configuration. Similarly as before, a configuration is said to be permissible if every occupied lot has at least one neighboring lot unoccupied (except maybe for the first and the last lot which receive sunlight from the boundary) so that it is not blocked from the sunlight. Among permissible configurations, we are interested in the jammed ones, namely configurations such that any addition of a house on an unoccupied lot would result in an impermissible configuration.

When it comes to the set of all jammed configurations (for both the original two-dimensional settlement model, and the Riviera model), there are several natural questions that one can try to answer. Some of them are: How many different jammed configurations are there on an $m \times n$ tract of land ($m, n \in \mathbb{N}$)? How many of them have a particular building density? What is the minimal ($\rho_{\min}$), and what is the maximal ($\rho_{\max}$) density of a jammed configuration? What is the average density of a jammed configuration?

Some of these questions have been tackled in the general two-dimensional case (see \cite{PSZ-21, PSZ-21-2}), but due to the complexity of the general case, most of the exact results are known only for the Riviera model. To answer the question about the number of different jammed configurations in the Riviera model with prescribed length and occupancy, one can compute the bivariate generating function enumerating all such configurations, and this has been done in \cite{DPSZ}, and later reconstructed in \cite{krapivsky2022jamming}. It is very easy to see that for the Riviera model $\rho_{\min} = 1/2$, and $\rho_{\max} = 2/3$, but much more interesting question is the one about the average density. By average density we mean the expected value of the random variable measuring the density of a randomly sampled jammed configuration. When it comes to this question, we first have to clarify how we randomly sample a jammed configuration. Notice that we can do that in (at least) two natural ways. We refer to those two cases as the dynamic, and the equilibrium (static) version of the model. In the dynamic version of the model, a jammed configurations is reached by sequentially (and randomly) building houses until no more houses can be built without violating the permissibility condition. These kinds of models, where particles (houses \cite{PSZ-21-2}, atoms \cite{Krapiv20}, molecules \cite{gonzalez1974cooperative}, cars \cite{Page}, or some other type of particles) are randomly and sequentially introduced in a system, have been studied extensively in the literature, under the common name of Random sequential adsorption (RSA), and they have numerous applications in physics, chemistry and biology \cite{talbot2000car}. If a jammed configuration is randomly sampled in this way (by randomly and sequentially building houses until we reach one), the limit of average (expected) densities, as the length of the tract of land goes to infinity, is called the \emph{jamming limit}. In the equilibrium version of the model, we consider the set of all jammed configurations, and we sample uniformly at random one of them. The expected value of the density of a jammed configuration sampled in such a way converges (as the length of the configuration tends to infinity) to the argument of the maximum of the so-called \emph{complexity function} (also known as \emph{configurational entropy}) of the ensemble of all jammed configurations. We provide a precise definition of the complexity function in Section \ref{sec:MCRP}. After the Riviera model was introduced in \cite{DPSZ}, it immediately received attention in physics community (see \cite{krapivsky2022jamming}) due to its resemblance to standard RSA models that have already been extensively studied. Both jamming limit and configurational entropy of Riviera model have been computed in \cite{krapivsky2022jamming}. Even though one can sometimes calculate the jamming limit by analytical means, lack of the so-called shielding property (which turns out to be crucial) makes it nearly impossible to do it for the Riviera model. In such cases, the jamming limit is approximated by using Monte Carlo simulations, and this was exactly the approach used by the authors in \cite{krapivsky2022jamming}.

Our main focus in this paper will be the static model, but not in the original setting. We upgrade the Riviera model with two new categories of agents, namely predators and altruists. After a jammed configuration is reached, no more houses can be built without making this configuration impermissible. However, behavior of predators and altruist is guided by a philosophy that allows new houses to be built. Predators do not care about others, but they do care about themselves. This means that they will build a house on a an empty lot as long as this house will receive sunlight, regardless of the fact that they could block sunlight to some other house (see Figure \ref{fig:Predator_arriving}).
%%%%%%%%%%%%%%%%%%%%%%%%%%%%%%%%%%%%%%%%%%
%%% Predator arriving on a jammed conf %%%
%%%%%%%%%%%%%%%%%%%%%%%%%%%%%%%%%%%%%%%%%%
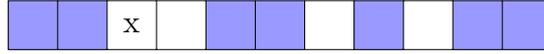
\begin{figure}
	\centering
	\begin{tikzpicture}[scale = 0.65]
		\fill[blue!40!white] (0,0) rectangle (1,1);
            \fill[blue!40!white] (1,0) rectangle (2,1);
            \fill[blue!40!white] (4,0) rectangle (5,1);
            \fill[blue!40!white] (5,0) rectangle (6,1);
            \fill[blue!40!white] (7,0) rectangle (8,1);
            \fill[blue!40!white] (9,0) rectangle (10,1);
            \fill[blue!40!white] (10,0) rectangle (11,1);
		\node[] at (2.5,0.5) {x};
		\draw[step=1cm,black,very thin] (0, 0) grid (11,1);
	\end{tikzpicture}
	\caption{An example of a jammed configuration on which a predator could build a house. If a house is built on the lot marked with `x', then this house will receive sunlight from the east. It will also block the sunlight to the house to its west, but that does not concern the predator. Notice that the predator could, equally, build a house on the lot to the east of the one marked with `x' (but not on both).}
	\label{fig:Predator_arriving}
\end{figure}
On the other hand, altruists will never (completely) block the sunlight to some other house, but they do not mind if their house does not receive any sunlight (see Figure \ref{fig:Altruist_arriving}).
Clearly, neither type of behavior is desirable to owners of already existing
homes, who built their community according to the rules.
%%%%%%%%%%%%%%%%%%%%%%%%%%%%%%%%%%%%%%%%%%
%%% Altruist arriving on a jammed conf %%%
%%%%%%%%%%%%%%%%%%%%%%%%%%%%%%%%%%%%%%%%%%
\begin{figure}
	\centering
	\begin{tikzpicture}[scale = 0.65]
		\fill[blue!40!white] (0,0) rectangle (1,1);
            \fill[blue!40!white] (2,0) rectangle (3,1);
            \fill[blue!40!white] (4,0) rectangle (5,1);
            \fill[blue!40!white] (6,0) rectangle (7,1);
            \fill[blue!40!white] (7,0) rectangle (8,1);
            \fill[blue!40!white] (9,0) rectangle (10,1);
            \fill[blue!40!white] (10,0) rectangle (11,1);
		\node[] at (3.5,0.5) {x};
		\draw[step=1cm,black,very thin] (0, 0) grid (11,1);
	\end{tikzpicture}
	\caption{An example of a jammed configuration on which an altruist could build a house. If a house is built on the lot marked with `x', then this house will not block sunlight to its east nor its west neighbor, since the one to the west is exposed to sunlight from the west, and the one to the east is exposed to sunlight from the east. The house built on the lot marked with `x', however, will not receive any sunlight, but that does not concern the altruist.}
	\label{fig:Altruist_arriving}
\end{figure}
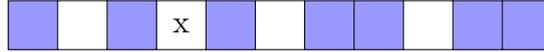

In normal circumstances, both predators and altruists are held in check by
rule-enforcing (or even law-enforcing) authorities. However, there are times
when (and places where) the authorities are either unable or unwilling to
enforce the rules. The causes may vary, from high levels of corruption to
the total societal breakdown. Under such circumstances, it would be
advantageous to live in a community (here modeled by a jammed configuration)
which is not attractive to potential selfish invaders of either type, while
still orderly enough to be acceptable to its rules-respecting and
cooperative inhabitants. Such situations appear in evolutionary biology
when an established community is faced with an invasion of new species, and
is modeled by methods of the evolutionary game theory. So, we borrow some
terms and concepts from that setting.

If all the empty lots on a jammed configuration have the property that, if one builds a house there, this house will be blocked from the sunlight, we call such a configuration \emph{maximal configuration resistant to predators} (see Figure \ref{fig:MCRP}). Similarly, if all the empty lots on a jammed configuration have the property that, if one builds a house there, some other house will not receive any sunlight, we refer to such a configuration as \emph{maximal configuration resistant to altruists} (see Figure \ref{fig:MCRA}). A jammed configuration that is resistant to both predators and altruists is called \emph{evolutionary stable configuration} (see Figure \ref{fig:ES}).
\begin{figure}
\begin{subfigure}{1.0\textwidth}
    \centering
    % include first image
    %%%%%%%%%%%%
    %%% MCRP %%%
    %%%%%%%%%%%%
    \begin{tikzpicture}[scale = 0.65]
	\fill[blue!40!white] (0,0) rectangle (1,1);
        \fill[blue!40!white] (2,0) rectangle (3,1);
        \fill[blue!40!white] (4,0) rectangle (5,1);
        \fill[blue!40!white] (6,0) rectangle (7,1);
        \fill[blue!40!white] (7,0) rectangle (8,1);
        \fill[blue!40!white] (9,0) rectangle (10,1);
        \fill[blue!40!white] (10,0) rectangle (11,1);
        \draw[step=1cm,black,very thin] (0, 0) grid (11,1);
    \end{tikzpicture}
    \caption{A maximal configuration resistant to predators.}
    \label{fig:MCRP}
\end{subfigure}

\bigskip

\begin{subfigure}{1.0\textwidth}
    \centering
    % include second image
    %%%%%%%%%%%%
    %%% MCRA %%%
    %%%%%%%%%%%%
    \begin{tikzpicture}[scale = 0.65]
	\fill[blue!40!white] (0,0) rectangle (1,1);
        \fill[blue!40!white] (1,0) rectangle (2,1);
        \fill[blue!40!white] (4,0) rectangle (5,1);
        \fill[blue!40!white] (5,0) rectangle (6,1);
        \fill[blue!40!white] (7,0) rectangle (8,1);
        \fill[blue!40!white] (9,0) rectangle (10,1);
        \fill[blue!40!white] (10,0) rectangle (11,1);
	\draw[step=1cm,black,very thin] (0, 0) grid (11,1);
\end{tikzpicture}
    \caption{A maximal configuration resistant to altruists.}
    \label{fig:MCRA}
\end{subfigure}

\bigskip

\begin{subfigure}{1.0\textwidth}
    \centering
    % include third image
    %%%%%%%%%%
    %%% ES %%%
    %%%%%%%%%%
    \begin{tikzpicture}[scale = 0.65]
	\fill[blue!40!white] (0,0) rectangle (1,1);
        \fill[blue!40!white] (2,0) rectangle (3,1);
        \fill[blue!40!white] (3,0) rectangle (4,1);
        \fill[blue!40!white] (5,0) rectangle (6,1);
        \fill[blue!40!white] (7,0) rectangle (8,1);
        \fill[blue!40!white] (8,0) rectangle (9,1);
        \fill[blue!40!white] (10,0) rectangle (11,1);
        \draw[step=1cm,black,very thin] (0, 0) grid (11,1);
    \end{tikzpicture}
    \caption{An evolutionary stable configuration.}
    \label{fig:ES}
\end{subfigure}
\caption{Examples of maximal configurations resistant to predators, to altruists, and to both predators and altruists.}
\label{fig:examples_of_MCRP_MCRA_ES}
\end{figure}
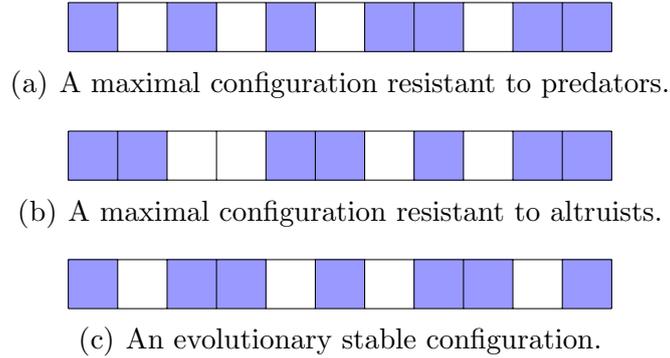

In the rest of this paper, we consider the static model, in which we start
from a fully built jammed configuration and we consider all such configurations
equally likely. There are two main reasons for considering the static and not
the dynamic version. The first is that even the anti-social actors are,
usually, quite rational. There is no reason to violate the rules if you can
still build a house within their scope. So, until a configuration becomes
jammed, everybody behaves cooperatively and follows the rules. The second
reason is that we assume that low-abiding actors are not paranoid. If they
anticipate a breakdown of the social order, their behavior will be influenced
by their fears and they will act following a different set of rules.
Our aim here is not to give a prescription for building resistant communities
(although our results can be used to this end), but to characterize the
resistant communities arising under a given set of rules and to find out
how common they are.

The rest of the paper is organized as follows. In Section \ref{sec:MCRP} we analyze maximal configurations resistant to predators, in Section \ref{sec:MCRA} we are dealing with maximal configurations resistant to altruists, and in Section \ref{sec:ES} we discuss evolutionary stable configurations. The crucial step in the analysis of each of these three cases, is to identify the extra conditions that will secure that a maximal configuration from the Riviera model will additionally be resistant to predators, to altruists, or to both. As mentioned earlier, we develop bivariate generating functions and complexity functions in all three cases. In Section \ref{sec:general} we make some observations about maximal configurations resistant to predators, maximal configurations resistant to altruists, and evolutionary stable configurations in the general setting of the two-dimensional settlement model. It turns out that some very precise observations can be made about the evolutionary stable configurations in two dimensions. Finally, in Section \ref{sec:concluding} we recapitulate our findings and indicate some possible directions of future research.

\section{Maximal configurations resistant to predators}\label{sec:MCRP}
In this, and the following two sections, we represent configurations as binary $0/1$ sequences which are interpreted as sequences of empty/occupied lots on a one-dimensional tract of land. Denote by
\begin{equation}\label{eq:def_of_J_knP}
    J_{k,n}^{P} = \begin{array}{cc}
        \#  \mbox{ of maximal configurations resistant to predators}  \\
         \mbox{of length $n$, with precisely $k$ occupied lots.}
    \end{array}
\end{equation}
Our idea is to use the transfer matrix method (see \cite[\S 4.7]{Stanley}, \cite[\S V]{FlajoletSedgewick}, and \cite[\S 2--4]{SymbDynCoding}) to compute the bivariate generating function for the double sequence $(J_{k,n}^{P})$. This is a well known method for counting words of a regular language. Applicability of this method to our setting relies on the fact that we can check whether a given configuration is in fact a maximal configuration resistant to predators by inspecting only finite size patches of that configuration. There is some freedom while working with the transfer matrix method, and we will use slightly different approaches in this and the following two sections. In this section, we will use the same approach as in the original paper \cite{DPSZ} where the Riviera model was introduced. The first step is to identify the forbidden patterns. To guarantee that we will end up with a jammed configuration, we need to include all the forbidden patterns that were already present in the original Riviera model. These are $111$, $000$, $0100$, and $0010$ (see \cite[Lemma 2.1]{DPSZ} for details). To secure that the maximal configurations that we end up with are also resistant to predators, it is clear that we additionally need to forbid two consecutive zeros. A predator will never build a house on an empty lot that has houses on both of its neighboring lots, so the only option is that we have two consecutive empty lots (we will never have more than two due to maximality). When there are two consecutive empty lots, once the predator builds a house on one of them, her house will still receive sunlight from the side of the other empty lot. As before, we have to pay special attention to boundary lots. Since we assume that the boundary tracts of land are not adjacent to any other buildings, i.e.\ there are no obstructions to sunlight from the boundary, a maximal configuration resistant to predators must have houses on both of its boundary lots. We now have all the necessary information we need to be able to apply the transfer matrix method. Due to the fact that the longest forbidden pattern is of length $4$, we can encode each maximal configuration resistant to predators as a walk on the directed graph shown in Figure \ref{fig:transfer_graph_MCRP}. The vertices of this graph represent all the allowed substrings of length $3$, and the directed edges represent the allowed transitions, see \cite[\S 2.3]{SymbDynCoding} for more details on this construction. There is an edge from the word $u_1u_2u_3$ to $v_1v_2v_3$ if they \emph{overlap progressively}, meaning that $u_2u_3=v_1v_2$, and if the word $u_1u_2u_3v_3=u_1v_1v_2v_3$ is not forbidden.  (Our graph is therefore a subgraph of the $3$-dimensional de Bruijn graph over symbols $\{0,1\}$. Not all edges are present, since the transitions that correspond to the forbidden $4$ letter words must be deleted.) Thus, a transition simply represents the addition of a new lot to the right of the configuration, the state of which is given as the last letter of the string of the target node. The graph in question will be even simpler than the one in the original Riviera model since there we could have two consecutive zeros. It is easy to see (check also \cite[\S 2.1]{DPSZ}) that the only vertices in this graph will be $011$, $110$, $101$ and $010$.
\begin{figure}[h]
    \begin{tikzpicture}[node distance=5em, nodeStyle/.style={draw, circle, minimum size=2.5em}]
	\node (A) [nodeStyle, draw=blue, line width=2pt] {011};
	\node (B) [right of = A, nodeStyle, fill=blue!20] {110};
	\node (C) [right of = B, nodeStyle, fill=blue!20, draw=blue, line width=2pt] {101};
	\node (D) [right of = C, nodeStyle] {010};
	\draw[-Stealth,above] (A) edge[bend left] (B);
	\draw[-Stealth,above] (B) edge[bend left] (C);
	\draw[-Stealth,above] (C) edge[bend left] (A);
        \draw[-Stealth,above] (C) edge[bend left] (D);
	\draw[-Stealth,above] (D) edge[bend left] (C);
    \end{tikzpicture}
    \caption{The transfer digraph for maximal configurations resistant to predators. The starting nodes are \textcolor{blue!70}{shaded} and \textcolor{blue}{thicker outlines} indicate the ending nodes. For example, a maximal configuration resistant to predators $10101011011$ (see Figure \ref{fig:MCRP}) is represented by a walk: 	\\	 $ 101 \to 010 \to 101 \to 010 \to 101 \to 011 \to 110 \to 101 \to 011$.}
    \label{fig:transfer_graph_MCRP}
\end{figure}
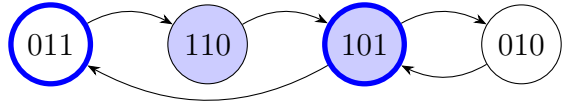
We now define the following matrix function:
\begin{equation}
    \kbordermatrix{
		& \text{011} & \text{110} & \text{101} & \text{010}\\
		\text{011} & 0 & 1 & 0 & 0\\
		\text{110} & 0 & 0 & x & 0\\
		\text{101} & x & 0 & 0 & 1\\
		\text{010} & 0 & 0 & x & 0\\
	} =: A(x).
\end{equation}
The purpose of this matrix function is to encode when a transition results in the increase of number of occupied lots. Namely:
\begin{equation*}
    i \to j \mbox{ is a transition which adds an occupied lot} \iff [A(x)]_{i,j} = x,
\end{equation*}
while the rest of the transitions which do not contribute an occupied lot are set to $1 = x^0$. The powers of $A(x)$, namely $(A(x))^n$, encode the distribution of occupancies for the configurations of length $n$. We have:
\begin{equation}
    [(A(x))^n]_{i,j} = p_0^{i,j} + p_1^{i,j} x + p_2^{i,j} x^2 + \dots + p_n^{i,j} x^n,
\end{equation}
where
\begin{equation*}
    p_k^{i,j} = \begin{array}{cc}
        \#  \mbox{ of walks of length $n$ on the graph in Figure \ref{fig:transfer_graph_MCRP} }  \\
         \mbox{starting with node $i$ and ending with node $j$,} \\
         \mbox{where the number of occupied lots was increased by $1$, $k$ times. }
    \end{array}
\end{equation*}
In order to take into account which vertices we can start with, the number of occupied lots within those vertices, and which vertices we can end with, we define vectors:
\begin{equation}
    a(x) = (0,x^2,x^2,0)^T,\quad  b = (1,0,1,0)^T.
\end{equation}
Combining all of this (and denoting the bivariate generating function for maximal configurations resistant to predators by $F^P(x,y)$), we obtain:
\begin{equation}\label{eq:BGF_for_MCRP}
    \begin{aligned}
	F^P(x, y)
	& = 1 + xy + x^2y^2 + \sum_{n = 3}^{\infty} a(x)^T \cdot (A(x))^{n-3}\cdot b \cdot y^n \\
	& = \frac{1 + xy - (x - x^2)y^2 - x^2y^3}{1 - xy^2 - x^2y^3} \\
	&  = \sum_{n=0}^\infty\sum_{k=0}^\infty J_{k,n}^P x^ky^n,
    \end{aligned}
\end{equation}
where $J_{k,n}^P$ is defined in \eqref{eq:def_of_J_knP}.
\begin{remark}
    Plugging $x = 1$ into \eqref{eq:BGF_for_MCRP}, we get the generating function for the sequence $(J^{P\text{-length}}_n)_n$ which counts the total number of maximal configurations resistant to predators of length $n$. This sequence appears on OEIS \cite{oeis} as \href{https://oeis.org/A000931}{A000931}, the famous Padovan sequence. On the other hand, plugging $y = 1$ into \eqref{eq:BGF_for_MCRP} yields the generating function for the sequence $(J^{P\text{-occupancy}}_k)_k$ which counts the total number of maximal configurations resistant to predators with precisely $k$ houses. This sequence corresponds to the even more famous Fibonacci sequence (which can be found on OEIS under code \href{https://oeis.org/A000931}{A000045}).
\end{remark}
Our next goal is to obtain the complexity function of maximal configurations resistant to predators. We first recall the definition of complexity function (configurational entropy) of a certain model.
\begin{definition}\label{def:complexity}
    For a fixed density $\rho \ge 0$, take $((k_i, n_i))_i$ to be any sequence of pairs of non-negative integers such that $\lim_{i\to\infty} n_i = +\infty$ and  $\lim_{i\to\infty} \frac{k_i}{n_i} = \rho$. Denote by $J_{k_i,n_i}$ the number of configurations of length $n_i$ with density $\frac{k_i}{n_i}$. We are interested in the quantity
	$$\limsup_{i\to \infty} \frac{\ln J_{k_i,n_i}}{n_i},$$
    which is the exponential rate of growth of these configurations. If we now take the supremum over all such sequences, we arrive at the definition of complexity function $\FJADEF{S(\rho)}{[0,\infty)}{[0,\infty)}$
    \begin{equation}\label{eq:cmplxDEF}
	S(\rho) = \sup_{(k_i,n_i)} \limsup_{i\to \infty} \frac{\ln J_{k_i,n_i}}{n_i},
    \end{equation}
    where the supremum runs over all the sequences such that $k_i/n_i\to\rho$ and $n_i\to\infty$.
\end{definition}
\begin{remark}
    Notice that we use the notation $J_{k, n}$ for the number of configurations of length $n$ and occupancy $k$, in an arbitrary model of this sort, regardless of the background rule for composing such configurations. We will later compute the complexity functions for maximal configurations resistant to predators, maximal configurations resistant to altruists, and maximal evolutionary stable configurations; but one can try to calculate the above defined complexity function for any one-dimensional irreversible deposition model, or, indeed, for any ensemble of binary strings.
\end{remark}
\begin{remark}
    Whenever we encounter $J_{k,n}=0$ for some $(k,n)$, we will redefine it as $J_{k,n}=1$ so that $\ln J_{k,n}=0$ can be computed. Consequentially, if there are no configurations with densities approaching a certain $\rho$, we get $S(\rho)=0$. Also note that the $\limsup$ can be replaced with $\lim$ since we can, if needed, pass to a subsequence.
\end{remark}
\begin{remark}
    Definition \ref{def:complexity} implies that the number of configurations with density $k/n\approx\rho$ grows as $e^{nS(\rho)}$ for large $n$. The density $\rho_{\star}$ at which the complexity function $S(\rho)$ attains its maximum, i.e.\ the density corresponding to the largest rate of growth, is called the \emph{equilibrium density}.
\end{remark}
\begin{remark}
    In most commonly encountered models the $\sup$ in the definition is superfluous, as any choice of the sequence (say $((k_n,n))_n$ where $k_n=\FLOOR{\rho n}$) will produce the same limit.
\end{remark}
We are now ready to state the main result of this section.
\begin{theorem}\label{tm:cmpl_predatori}
    The complexity function $S^P(\rho)$ of maximal configurations resistant to predators is
    \begin{equation}\label{eq:cmpl_predatori}
        S^P(\rho) = (1 - \rho) \ln(1 - \rho) - (2\rho - 1) \ln(2\rho - 1) - (2 - 3\rho) \ln(2 - 3\rho), \quad \frac{1}{2} < \rho < \frac{2}{3}.
    \end{equation}
\end{theorem}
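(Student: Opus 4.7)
The plan is to bypass the bivariate generating function \eqref{eq:BGF_for_MCRP} entirely and instead derive $J_{k,n}^P$ in closed form as a single binomial coefficient; formula \eqref{eq:cmpl_predatori} then drops out of Stirling's approximation.

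The first step is a structural reduction, already implicit in the discussion leading to Figure \ref{fig:transfer_graph_MCRP}. Once the substring $00$ is forbidden, the earlier forbidden patterns $000$, $0100$, $0010$ become redundant, so an MCRP is precisely a $0/1$-word beginning and ending with $1$ whose only forbidden substrings are $111$ and $00$. Any such word decomposes \emph{uniquely} as a concatenation of blocks of $1$'s of length either $1$ or $2$, separated by single $0$'s. Writing $b_1$, $b_2$ for the numbers of singleton and double blocks and $b=b_1+b_2$ for their total, the length and occupancy constraints read $n = 2b_1+3b_2-1$ and $k=b_1+2b_2$, solving uniquely to
\begin{equation*}
b = n-k+1, \qquad b_1 = 2n-3k+2, \qquad b_2 = 2k-n-1.
\end{equation*}
Since an MCRP is fully determined by the order in which its blocks appear, I obtain the closed form
\begin{equation*}
J_{k,n}^P \;=\; \binom{n-k+1}{2k-n-1} \qquad (n \geq 3),
\end{equation*}
with the binomial coefficient vanishing whenever either $b_1$ or $b_2$ would be negative. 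A sanity check against the recursion $J_{k,n}^P = J_{k-1,n-2}^P + J_{k-2,n-3}^P$ dictated by the denominator of \eqref{eq:BGF_for_MCRP} is immediate via Pascal's rule.

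For the asymptotics, I fix $\rho \in (1/2,2/3)$ and take $k_n := \FLOOR{\rho n}$. As $n \to \infty$, the triple $(b/n,\,b_1/n,\,b_2/n)$ tends to $(1-\rho,\,2-3\rho,\,2\rho-1)$, with all three limits strictly positive. Stirling's formula applied to the binomial then gives
\begin{equation*}
\frac{\ln J_{k_n,n}^P}{n} \longrightarrow (1-\rho)\ln(1-\rho)-(2\rho-1)\ln(2\rho-1)-(2-3\rho)\ln(2-3\rho),
\end{equation*}
which is a lower bound for $S^P(\rho)$ per Definition \ref{def:complexity}. Since the same limit is produced by any sequence $(k_i,n_i)$ with $k_i/n_i\to\rho$, by continuity of the right-hand side throughout the open interval $(1/2,2/3)$, the supremum in \eqref{eq:cmplxDEF} is attained and equals the above expression.

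The only real obstacle is Step~1: one must carefully justify that imposing predator-resistance on top of the Riviera rules collapses the list of forbidden patterns to just $111$ and $00$, and that the predator is also thwarted at the boundary precisely when the two extreme lots are both occupied. After that reduction the entire argument is a single binomial count combined with a routine Stirling estimate, so no further delicate analysis (singularity analysis of \eqref{eq:BGF_for_MCRP}, saddle point, etc.) is required.
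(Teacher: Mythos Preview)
Your proof is correct and essentially coincides with the paper's first proof (the ``direct combinatorial approach''): both derive the closed form $J_{k,n}^P=\binom{n-k+1}{2k-n-1}$ and then apply Stirling's formula. The only cosmetic difference is that the paper phrases the count as ``start from the alternating word $1010\cdots 01$ and insert $2k-n-1$ extra $1$'s next to existing ones,'' whereas you phrase it as a block decomposition into runs of one or two $1$'s separated by single $0$'s; these are the same bijection viewed from opposite ends.
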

\begin{remark}
    Since the maximal configurations resistant to predators form a subset of all the maximal configurations from the Riviera model, we clearly have that the support of the complexity function of maximal configurations resistant to predators is a subset of the support of the complexity function of the Riviera model. However, it is easy to see that the two supports will in fact coincide. Notice that the configurations of the form
    \begin{equation*}
        1010101 \ldots 010101 \qquad \textnormal{or} \qquad 110110 \ldots 11011
    \end{equation*}
    are maximal and resistant to predators. In the limit, the pattern on the left gives the density of $\frac{1}{2}$, and the pattern on the right gives the density of $\frac{2}{3}$. Clearly, by combining these two patterns, we can achieve any density between $\frac{1}{2}$ and $\frac{2}{3}$.
\end{remark}
The configurational entropy of jammed configurations is usually determined either by means of direct combinatorial reasoning \cite{17_crisanti2000inherent, 31_dean2000metastable, 32_lefevre2001metastable, DPSZ-Ryd}, or by using the transfer-matrix approach \cite{20_lefevre2001tapping, 22_de2002jamming}. Recently, a new method for determining complexity function has been developed in \cite{KL}, inspired by the theory of renewal processes. Since our transfer matrix encodes transitions which always add one more lot to the right end of the configuration that is being built, we could use the transfer-matrix approach to compute complexity. This approach would include computing the characteristic equation of the transfer matrix $A(x)$, and finding the appropriate rational parametrization of this equation. For details, see \cite{krapivsky2022jamming} where this approach was used to obtain the complexity function of the original Riviera model. We will prove Theorem \ref{tm:cmpl_predatori} in three different ways in order to stress the combinatorial simplicity of maximal configurations resistant to predators, to make some informative connections between maximal configurations resistant to predators and one of the most famous models for irreversible deposition in the literature, and to illustrate the newly developed method from \cite{KL} (since we will again use it in the following sections).
\begin{proof}[Proof of Theorem \ref{tm:cmpl_predatori} using the direct combinatorial approach]
    The idea is to find a closed formula for the value $J_{k,n}^P$, i.e.\ for the number of maximal configurations resistant to predators of length $n$ with precisely $k$ occupied lots. There is an easy procedure to construct a maximal configuration resistant to predators of length $n$, with $k$ occupied lots, and by explaining this procedure it will become evident how many such configurations there are. Clearly, for some values of $k$ and $n$ we have $J_{k,n}^P = 0$, but the formula will work even in those cases, as long as we take $k \le n$ (which we always have since the number of houses is obviously bounded by the number of lots). A maximal configuration resistant to predators that has length $n$, and $k$ of those $n$ lots are occupied by a house, has precisely $n-k$ empty lots. As explained earlier, all the empty lots in a maximal configuration resistant to predators have to be isolated (if there are two consecutive empty lots, a predator could build a house on one of them). Moreover, recall that we have to start and end with occupied lots (if the boundary lots are not occupied, predators could come there since there is no obstruction to sunlight from the boundary).

    The idea is to start with a configuration that has precisely $n-k$ empty lots, and in which occupied and empty lots alternate (we start and end with an occupied lot). This configuration can be represented as
    \begin{equation}\label{eq:starting_conf_MCRP}
        \underbrace{1010101 \ldots 010101}_{n-k \textnormal{ empty lots, and } n - k + 1 \textnormal{ occupied lots}}
    \end{equation}
    This configuration is already a maximal configuration resistant to predators, but we may need to insert more occupied lots to reach the total number of $k$ occupied lots (and consequentially, to reach the length $n$). Notice that we can insert one additional occupied lot to the left of any of the existing occupied lots. In that way we keep the number of empty lots fixed ($n-k$), we keep all the empty lots isolated, and we do not tamper with permissibility since all the blocks of two consecutive houses will be exposed to sunlight from east and west. Clearly, we cannot lose maximality or resistance to predators by inserting more houses in this way. Hence, we have $n-k+1$ potential places for inserting new houses. Since the configuration shown in \eqref{eq:starting_conf_MCRP} already has $n-k+1$ houses, we still need to insert $k - (n - k + 1) = 2k-n-1$ new houses to have $k$ of them. This means that out of $n-k+1$ possible options, we have to choose exactly $2k-n-1$ of them. Therefore,
    \begin{equation}\label{eq:exact_form_for_J_knP}
        J_{k,n}^P = \binom{n-k+1}{2k-n-1}.
    \end{equation}
    Let us now fix some $\rho \ge 0$, and take an arbitrary sequence $((k_i,n_i))_i$ of pairs of non-negative integers such that $\lim_{i\to \infty} n_i = +\infty$, and $\lim_{i \to \infty} \frac{k_i}{n_i} = \rho$. Recall that the Stirling's approximation gives us
    \begin{equation*}
        \ln(n!) = n \ln n - n + O(\ln n),
    \end{equation*}
    where the big $O$ notation means that, for all sufficiently large values of $n$, the difference between $\ln(n!)$ and $n \ln n - n$ will be at most proportional to $\ln n$. Combining Stirling's approximation with \eqref{eq:exact_form_for_J_knP} gives us
    \begin{align*}
        \lim_{i \to \infty} \frac{\ln (J_{k_i, n_i}^P)}{n_i}
        & = \lim_{i \to \infty} \frac{{\ln((n_i-k_i+1)!) - \ln((2k_i-n_i-1)!) - \ln((2n_i-3k_i+2)!)}}{n_i} \\
        & = \lim_{i \to \infty} \left\{ \frac{n_i-k_i+1}{n_i} \cdot \left[ \ln \left(n_i \cdot \frac{n_i-k_i+1}{n_i} \right) - 1 \right] \right. \\
        & \qquad \qquad - \frac{2k_i-n_i-1}{n_i} \cdot \left[ \ln \left(n_i \cdot \frac{2k_i-n_i-1}{n_i} \right) - 1 \right] \\
        & \qquad \qquad \qquad - \left. \frac{2n_i-3k_i+2}{n_i} \cdot \left[ \ln \left(n_i \cdot \frac{2n_i-3k_i+2}{n_i} \right) - 1 \right] \right\}\\
        & = \lim_{i \to \infty} \left\{ \frac{n_i-k_i+1}{n_i} \cdot \ln \left( \frac{n_i-k_i+1}{n_i} \right) \right.\\
        & \qquad \qquad - \frac{2k_i-n_i-1}{n_i} \cdot \ln \left( \frac{2k_i-n_i-1}{n_i} \right) \\
        & \qquad \qquad \qquad - \left. \frac{2n_i-3k_i+2}{n_i} \cdot \ln \left( \frac{2n_i-3k_i+2}{n_i} \right) \right\} \\
        & = (1 - \rho) \ln(1 - \rho) - (2\rho - 1) \ln(2\rho - 1) - (2 - 3\rho) \ln(2 - 3\rho).
    \end{align*}
    Combining this with \eqref{eq:cmplxDEF} finishes the proof.
\end{proof}
\begin{proof}[Proof of Theorem \ref{tm:cmpl_predatori} using the connection with Flory's model]
    Probably the most famous model of irreversible deposition of particles is Flory's model (see \cite{Flory, Krapivsky_et_al, KL, DPSZ-Ryd, Page, GerinPRparkingHAL}), where atoms are deposited on a one-dimensional lattice. Each site in the lattice can be occupied by an atom, or left vacant. The only constraint is that each atom has to have vacant sites to both of its sides (i.e.\ no two consecutive sites can both be occupied by atoms). Notice that this is exactly the opposite of the condition characterizing maximal configurations resistant to predators where it is not allowed to have two consecutive empty lots. This implies that maximal configurations resistant to predators are ``negatives'' of the jammed configurations in Flory's model. The correspondence is not one-to-one, since the jammed configurations in Flory's model can start (or end) with an occupied site, and maximal configurations resistant to predators cannot start (nor end) with an empty lot. However, this only means that the number of maximal configurations resistant to predators and the number of jammed configurations in Flory's model differ by at most a constant factor, which does not affect the complexity functions (since the complexity function ignores sub-exponential factors). The complexity function for Flory's model is already known in the literature (see for example \cite[formula (7.20)]{Krapivsky_et_al} or \cite[Remark 3.7]{DPSZ-Ryd}), and the formula is
    \begin{equation*}
        S^{\text{Flory}}(\rho) = \rho \ln \rho - (1-2\rho) \ln (1-2\rho) - (3\rho - 1)\ln(3\rho - 1), \quad \frac{1}{3} < \rho < \frac{1}{2}.
    \end{equation*}
    Since atoms in Flory's model correspond to empty lots in our model, and vacant sites from Flory's model correspond to occupied lots in our model, we have
    \begin{equation*}
        S^P(\rho) = S^{\text{Flory}}(1 - \rho) = (1 - \rho) \ln(1 - \rho) - (2\rho - 1) \ln(2\rho - 1) - (2 - 3\rho) \ln(2 - 3\rho),
    \end{equation*}
    where $\frac{1}{3} < 1 - \rho < \frac{1}{2}$, i.e.\ $\frac{1}{2} < \rho < \frac{2}{3}$.
\end{proof}
\begin{proof}[Proof of Theorem \ref{tm:cmpl_predatori} using the method developed in \cite{KL}]
    To compute the complexity function of maximal configurations resistant to predators by using the method developed in \cite{KL}, we first need to have the bivariate generating function for the sequence $(J_{k,n}^P)$. Luckily, we already computed this in \eqref{eq:BGF_for_MCRP}. Denote the denominator in \eqref{eq:BGF_for_MCRP} by $q^P(x, y) = 1 - xy^2 - x^2y^3$. The formula for the complexity function is given by
    \begin{equation}\label{eq:formula_for_CMPL_KL}
        S^P(\rho) = -\rho \ln x_0 - \ln y_0,
    \end{equation}
    where the connection between $\rho$, $x_0$ and $y_0$ is given by
    \begin{equation*}
        q^P(x_0, y_0) = 0 \quad \textnormal{and} \quad \rho = \left[ \frac{x}{y} \frac{\partial_x q^P}{\partial_y q^P} \right]_{x = x_0, y = y_0}.
    \end{equation*}
    Notice first that from $q^P(x_0, y_0) = 0$ we have
    \begin{equation}\label{eq:cpml_MCRP_step1}
        1 = x_0y_0^2 + x_0^2y_0^3 = y_0 \cdot x_0y_0 \cdot(1 + x_0y_0).
    \end{equation}
    Next, we have
    \begin{equation}\label{eq:cpml_MCRP_step2}
        \rho = \frac{x_0}{y_0} \cdot \frac{(-y_0^2 - 2x_0y_0^3)}{(-2x_0y_0 - 3x_0^2y_0^2)} = \frac{1 + 2x_0y_0}{2+3x_0y_0}.
    \end{equation}
    From \eqref{eq:cpml_MCRP_step2} we easily get
    \begin{equation}\label{eq:cpml_MCRP_step3}
        x_0y_0 = \frac{2\rho - 1}{2 - 3\rho}.
    \end{equation}
    By combining \eqref{eq:cpml_MCRP_step1} and \eqref{eq:cpml_MCRP_step3} we get
    \begin{equation*}
        1 = y_0 \cdot \frac{2\rho - 1}{2 - 3\rho} \cdot \left( 1 + \frac{2\rho - 1}{2 - 3\rho} \right) = y_0 \cdot \frac{2\rho - 1}{2 - 3\rho} \cdot \frac{1-\rho}{2-3\rho} = y_0 \cdot \frac{(1-\rho)(2\rho-1)}{(2-3\rho)^2},
    \end{equation*}
    which implies
    \begin{equation}\label{eq:cpml_MCRP_step4}
        y_0 = \frac{(2-3\rho)^2}{(1-\rho)(2\rho-1)}.
    \end{equation}
    Relation \eqref{eq:cpml_MCRP_step4}, together with \eqref{eq:cpml_MCRP_step3} gives
    \begin{equation}\label{eq:cpml_MCRP_step5}
        x_0 = \frac{(1-\rho)(2\rho - 1)^2}{(2 - 3\rho)^3}.
    \end{equation}
    Finally, combining \eqref{eq:formula_for_CMPL_KL}, \eqref{eq:cpml_MCRP_step4} and \eqref{eq:cpml_MCRP_step5} implies
    \begin{align*}
        S^P(\rho)
        & = -\rho (\ln(1-\rho) + 2\ln(2\rho-1) - 3\ln(2-3\rho))\\
        & \qquad - 2\ln(2-3\rho) + \ln(1-\rho) + \ln(2\rho-1) \\
        & = (1-\rho)\ln(1 - \rho) - (2\rho - 1) \ln(2\rho - 1) - (2 - 3\rho) \ln(2 - 3\rho).
    \end{align*}
\end{proof}
\begin{remark}
    For a slightly more formal explanation of the method for computing the complexity function introduced in \cite{KL}, see \cite{PSZ-CroCo}.
\end{remark}

\begin{figure}
	\includegraphics[scale=.7]{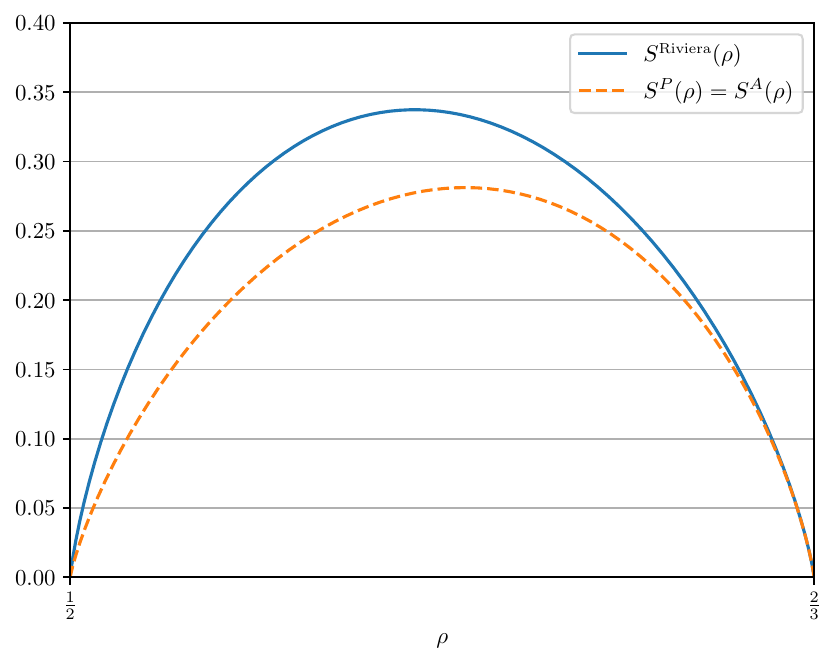}
	\caption{The complexity function $S^P(\rho)$ of maximal configurations resistant to predators compared to the complexity function of the Riviera model.}\label{fig:Complexity_P/A_vs_Riviera}
\end{figure}

\begin{remark}
	The complexity function $S^P(\rho)$ calculated in Theorem \ref{tm:cmpl_predatori} is shown in Figure \ref{fig:Complexity_P/A_vs_Riviera} where it is compared to the complexity function of the full Riviera model, as computed in \cite{krapivsky2022jamming}.
\end{remark}

\section{Maximal configurations resistant to altruists}\label{sec:MCRA}
The main goal of this section is to compute the bivariate generating function, and then the complexity function, of maximal configurations resistant to altruists. For computing the bivariate generating function, we again use the transfer matrix method. Before we proceed, we briefly come back to the original Riviera model. In \cite{DPSZ}, the bivariate generating function for the Riviera model was computed in an analogous way as the bivariate generating function for maximal configurations resistant to predators in the previous section. However, instead of adding only one new lot on the right hand side of the configuration with every step, we can add a whole block of lots. Let us explain this on the example of the original Riviera model, and then with a slight modification, we will be able to directly apply it to maximal configurations resistant to altruists. We consider the blocks that start with empty lots, and finish with occupied lots. Due to the permissibility condition, we cannot have more than two consecutive houses, and due to the maximality condition, we cannot have more than two consecutive empty lots. This leaves us with four possible blocks: $01$, $001$, $011$, $0011$. Out of these four blocks, only the block $001$ is problematic, because once we glue another one of these blocks to it, we will always end up with a forbidden pattern $0010$ (see \cite[Lemma 2.1]{DPSZ}). The other blocks do not necessarily lead to the same problem, and (as always with the transfer matrix method) we can solve other potential problems by forbidding some particular transitions. Hence, jammed configurations in the original Riviera model are built from the blocks
\begin{equation}\label{eq:building_blocks_for_Riviera}
    01, \quad 011, \quad \textnormal{and} \quad 0011.
\end{equation}
As usual, we need to take extra care when it comes to the beginning, and the end of jammed configurations. To get the starting blocks, we just need to remove the first (empty) lot from each of the blocks shown in \eqref{eq:building_blocks_for_Riviera}. We can end with any of the blocks from \eqref{eq:building_blocks_for_Riviera}, but in addition to the block $011$, we can also end with $0110$, and similarly, beside the block $0011$, we have to take into account the block $00110$ as a possible ending block. Using the order of blocks from \eqref{eq:building_blocks_for_Riviera}, we have the following formula for the bivariate generating function for the original Riviera model
\begin{equation}\label{eq:BGF_Riviera_aux}
    F(x,y) = 1 + \sum_{k = 0}^{\infty}
    \begin{bmatrix}
        xy & x^2y^2 & x^2y^3
    \end{bmatrix}
    \cdot
    \begin{bmatrix}
        xy^2 & x^2y^3 & 0 \\
        xy^2 & x^2y^3 & x^2y^4 \\
        xy^2 & x^2y^3 & x^2y^4
    \end{bmatrix}^k
    \cdot
    \begin{bmatrix}
        1 \\
        1 + y \\
        1 + y
    \end{bmatrix},
\end{equation}
where $x$ is again a formal variable corresponding to the number of houses, and $y$ is a formal variable corresponding to the length of the configuration. Notice that the only forbidden step is from block $01$ to block $0011$, since this transition would form the forbidden pattern $0100$. The expression from \eqref{eq:BGF_Riviera_aux} can easily be evaluated to obtain
\begin{equation*}
    F(x,y) = \frac{1+xy-(x-x^2)y^2+x^2y^3-x^3y^5}{1-xy^2-x^2y^3-x^2y^4+x^3y^6},
\end{equation*}
which reconstructs the result from \cite[p.\ 9]{DPSZ}.

We now move on to configurations that are of main interest in this chapter, and those are the maximal configurations resistant to altruists. To be able to apply the transfer matrix method, we have to detect the forbidden patterns. As in the previous section, it is easy to notice what kind of patterns will allow altruists to build a house. Since altruists do not want to be the reason that some house becomes (completely) blocked from the sunlight, they will never build a house next to a block of two consecutive houses. Notice also that if there is a block of two consecutive empty lots in the jammed configuration, it has to be surrounded with blocks of two consecutive houses from both sides since some of the forbidden patterns (000, 0010, 0100) would appear if this was not the case. This means that the only possibility is that altruists arrive on an empty lot that has houses on both of its neighboring lots (to the east and west). Furthermore, both of those houses need to receive sunlight even after the altruist comes. This implies that the only additional forbidden pattern that we need to take into account is $01010$. Therefore, we would need to work with blocks of length $4$ if we would want to apply the same approach as in the previous chapter. This can be done, but it is much easier to notice that this new forbidden pattern causes minimal changes to the calculation performed in \eqref{eq:BGF_Riviera_aux}, namely, we only need to forbid one more transition - the one from $01$ to $01$. Notice also that the maximal configurations resistant to altruists are not allowed to start with $1010$ nor to end with $0101$, because the sun also comes from the boundary of the tract of land. However, by forbidding the transition from $01$ to $01$, we immediately take care of these boundary restrictions. Hence, we have
\begin{align}\label{al:BGF_MCRA}
    F^A(x,y)
    & = 1 + \sum_{k = 0}^{\infty}
    \begin{bmatrix}
        xy & x^2y^2 & x^2y^3
    \end{bmatrix}
    \cdot
    \begin{bmatrix}
        0 & x^2y^3 & 0 \\
        xy^2 & x^2y^3 & x^2y^4 \\
        xy^2 & x^2y^3 & x^2y^4
    \end{bmatrix}^k
    \cdot
    \begin{bmatrix}
        1 \\
        1 + y \\
        1 + y
    \end{bmatrix} \nonumber\\
    & = \frac{1+xy+x^2y^2+x^2y^3+x^3y^4}{1-x^2y^3-x^2y^4-x^3y^5}
 	  = \frac{1+xy+x^2y^2+x^2y^3+x^3y^4}{(1-xy^2-x^2y^3)(1+xy^2)},
\end{align}
where we denoted the bivariate generating function for maximal configurations resistant to altruists by $F^A(x, y)$.
\begin{remark}
    As in the case of maximal configurations resistant to predators, if we plug $x = 1$, or $y = 1$ into \eqref{al:BGF_MCRA}, we get some sequences that are already present in the OEIS (though not as popular as the ones from the previous section). Plugging $x = 1$ into \eqref{al:BGF_MCRA}, we get the generating function for the sequence $(J_n^{A\text{-length}})_n$ which counts the total number of maximal configurations resistant to altruists of length $n$. This sequence appears on OEIS as \href{https://oeis.org/A017818}{A017818} and counts compositions of an integer $n$ into parts $3$, $4$, and $5$. On the other hand, plugging $y = 1$ into \eqref{al:BGF_MCRA} gives us the generating function for the sequence $(J_k^{A\text{-occupancy}})_k$ which counts the total number of maximal configurations resistant to altruists with precisely $k$ houses. This sequence can be found on OEIS under the code \href{https://oeis.org/A008346}{A008346}, and it is again related to Fibonacci sequence. More precisely, this is the sequence where the term $(-1)^n$ is added to the $n$-th Fibonacci number.
\end{remark}
Now that we have obtained the bivariate generating function, we are ready to calculate the complexity function of maximal configurations resistant to altruists.
\begin{theorem}\label{tm:cmpl_altruisti}
    The complexity function $S^A(\rho)$ of maximal configurations resistant to altruists is given by
    \begin{equation}
        S^A(\rho) = (1 - \rho) \ln(1 - \rho) - (2\rho - 1) \ln(2\rho - 1) - (2 - 3\rho) \ln(2 - 3\rho), \quad \frac{1}{2} < \rho < \frac{2}{3}.
    \end{equation}
\end{theorem}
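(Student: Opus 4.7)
The plan is to follow the third proof of Theorem \ref{tm:cmpl_predatori} almost verbatim, by applying the method of \cite{KL} to the bivariate generating function $F^A(x,y)$ computed in \eqref{al:BGF_MCRA}. The decisive structural observation is that the denominator of $F^A$ already appears in factored form as
\[
q^A(x,y) \;=\; (1 - xy^2 - x^2y^3)(1 + xy^2) \;=\; q^P(x,y) \cdot (1 + xy^2),
\]
where $q^P$ is precisely the denominator that controlled the complexity function in Section \ref{sec:MCRP}. Since $1 + xy^2$ is strictly positive on the positive real quadrant, I expect it to play no role in the exponential growth rate of the coefficients.

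The concrete verification is straightforward. The KL method calls for the system
\[
q^A(x_0, y_0) = 0, \qquad \rho = \frac{x_0}{y_0} \cdot \frac{\partial_x q^A}{\partial_y q^A}\bigg|_{(x_0,y_0)},
\]
and then $S^A(\rho) = -\rho \ln x_0 - \ln y_0$. For $x_0, y_0 > 0$ the vanishing $q^A(x_0, y_0) = 0$ forces $q^P(x_0, y_0) = 0$, because the second factor $1 + x_0 y_0^2$ is strictly positive. Applying the product rule at such a point, the terms containing $q^P(x_0,y_0)$ drop out, giving $\partial_x q^A = (\partial_x q^P)(1 + x_0 y_0^2)$ and $\partial_y q^A = (\partial_y q^P)(1 + x_0 y_0^2)$. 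The common factor $1 + x_0 y_0^2$ cancels in the ratio defining $\rho$, so the KL system reduces exactly to the one already solved in the third proof of Theorem \ref{tm:cmpl_predatori}.

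Consequently, the same parametrization $x_0(\rho), y_0(\rho)$ as derived in equations \eqref{eq:cpml_MCRP_step4}--\eqref{eq:cpml_MCRP_step5} is obtained, and substituting into $S^A(\rho) = -\rho \ln x_0 - \ln y_0$ reproduces the identical closed form. I expect the main (essentially bookkeeping) obstacle to be twofold: first, justifying that the spurious factor $1 + xy^2$ really does not contribute a competing dominant singularity (immediate since its zero set does not meet the closed positive quadrant), and second, confirming that the admissible range $\tfrac{1}{2} < \rho < \tfrac{2}{3}$ is exactly the same as in Section \ref{sec:MCRP}, which can be read off from \eqref{eq:cpml_MCRP_step4}--\eqref{eq:cpml_MCRP_step5} and cross-checked with the extremal configurations $1010\cdots01$ and $110110\cdots 11$, both of which are easily verified to be resistant to altruists as well.
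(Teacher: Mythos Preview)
Your main argument is correct and follows exactly the route the paper indicates (the KL method applied to $F^A$); the factorization shortcut you use to reduce the KL system for $q^A$ to that for $q^P$ is clean and is in fact the observation the paper itself records in Remark~\ref{rem:pred=alt}.

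There is, however, a factual slip in your final cross-check: the configuration $1010\cdots01$ is \emph{not} resistant to altruists. For length $\ge 4$ it begins with $1010$, and the paper explicitly notes that such configurations admit an altruist at the second lot (the house in lot~$1$ receives light from the boundary, the house in lot~$3$ receives light from the empty lot~$4$, so building on lot~$2$ blocks nobody). More generally, the pattern $01010$ is precisely the additional forbidden word characterizing altruist-resistance. The correct extremal altruist-resistant configuration realizing density $\tfrac12$ is $1100\,1100\cdots110011$, as given in the remark following Theorem~\ref{tm:cmpl_altruisti}. This error does not affect your derivation of the formula---the range $\tfrac12<\rho<\tfrac23$ already follows from the parametrization \eqref{eq:cpml_MCRP_step4}--\eqref{eq:cpml_MCRP_step5}---but the supporting example you cite should be replaced.
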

\begin{remark}
    Notice that the complexity function $S^A(\rho)$ is exactly the same as the complexity function $S^P(\rho)$ (see Theorem \ref{tm:cmpl_predatori} and Figure \ref{fig:Complexity_P/A_vs_Riviera}). Even though there is an intuitive argument why is that so (see Remark \ref{rem:pred=alt}), it is not obvious at first that these two models are equivalent in this sense.
\end{remark}
\begin{remark}
    As in the case of maximal configurations resistant to predators, it is easy to see that the support of the complexity function of maximal configurations resistant to altruists is indeed $\left[ \frac{1}{2}, \frac{2}{3} \right]$, the same as in the case of the full Riviera model. The extremal configurations resistant to altruists are
    \begin{equation*}
        11001100 \ldots 110011 \qquad \textnormal{and} \qquad 110110110 \ldots 11011
    \end{equation*}
    In the limit, the pattern on the left gives the density of $\frac{1}{2}$, and the pattern on the right gives the density of $\frac{2}{3}$. Clearly, by combining these two patterns, we can achieve any density between $\frac{1}{2}$ and $\frac{2}{3}$.
\end{remark}
Theorem \ref{tm:cmpl_altruisti} can be proved in a straightforward way by
using the approach developed in \cite{KL}. Hence, we omit the proof.
\begin{remark}\label{rem:pred=alt}
    Analogously as in the previous section, denote by
    \begin{equation}\label{eq:def_of_J_knA}
        J_{k,n}^{A} = \begin{array}{cc}
        \#  \mbox{ of maximal configurations resistant to altruists}  \\
         \mbox{of length $n$, with precisely $k$ occupied lots.}
        \end{array}
    \end{equation}
    Directly from \eqref{al:BGF_MCRA} we can read the recurrence relation for the double sequence $(J_{k,n}^{A})$, namely
    \begin{equation}\label{eq:rec_for_altruists}
        J_{k, n}^A = J_{k-2,n-3}^A + J_{k-2,n-4}^A + J_{k-3,n-5}^A.
    \end{equation}
    Similarly, from \eqref{eq:BGF_for_MCRP} we have
    \begin{equation*}
        J_{k, n}^P = J_{k-1,n-2}^P + J_{k-2,n-3}^P.
    \end{equation*}
    By applying this recurrence relation once more to the term $J_{k-1,n-2}^P$, we get
    \begin{equation}\label{eq:rec_for_predators}
        J_{k, n}^P = J_{k-2,n-3}^P + J_{k-2,n-4}^P + J_{k-3,n-5}^P.
    \end{equation}
    Notice now that recurrence relations \eqref{eq:rec_for_altruists} and \eqref{eq:rec_for_predators} are the same. Due to the different initial conditions, it does not hold that $J_{k,n}^A = J_{k,n}^P$ for $k, n \in \mathbb{N}$. However, it can be shown that the exponential growth for both double sequences is the same, and different initial conditions only imply that those sequences differ by a multiplicative factor. Since the complexity function ignores sub-exponential factors, the complexity function $S^A(\rho)$ of maximal configurations resistant to altruists is the same as the complexity function $S^P(\rho)$ of maximal configurations resistant to predators. Closely related to the observations above is the fact that the denominator of \eqref{eq:BGF_for_MCRP} divides the denominator of \eqref{al:BGF_MCRA} since the latter factorizes as
    $$1-x^2y^3-x^2y^4-x^3y^5 = (1-xy^2-x^2y^3)(1+xy^2).$$
\end{remark}
\section{Evolutionary stable configurations}\label{sec:ES}
In this section we combine the concepts from the last two sections. We are interested in the maximal configurations resistant to both predators and altruists. We call such configurations evolutionary stable, since even the newly introduced agents cannot invade them. We again start with the computation of the bivariate generating function. Notice that here (beside the forbidden patterns that are already present in the original Riviera model: $111$, $000$, $0100$ and $0010$) the forbidden patterns include both $00$ (which is forbidden because of the predators) and $01010$ (which is forbidden because of the altruists). We will once again use the transfer matrix method, and we can choose between the approaches used in the previous two sections. Both are reasonably simple, but the one where we glue blocks (and not just add one new lot) in each step is, again, much simpler. Since the evolutionary stable configurations are jammed configurations from the Riviera model, we can use the same reasoning as in the last section when we were discussing the maximal configurations resistant to altruists. We know that the jammed configurations in the original Riviera model are composed of blocks $01$, $011$, and $0011$. Clearly, we cannot use the block $0011$ to form evolutionary stable configurations since it contains the forbidden pattern $00$. Hence, the evolutionary stable configurations are composed only from the blocks $01$ and $011$. Furthermore, to ensure their resistance to altruists, we (as in the previous section) need to forbid the transition from the block $01$ to itself. When it comes to starting and ending blocks, the situation is even simpler than before. We know that we are not allowed to start (nor end) with an empty lot (due to the predators). Therefore, to obtain the starting nodes, we have to delete the first (empty) lot from the blocks $01$ and $011$. We do not need to pay special attention to the ending nodes, since jammed configurations can end with any of the two building blocks ($01$ or $011$). This leaves us with the following expression for the bivariate generating function for the evolutionary stable configurations
\begin{align}\label{al:BGF_ES}
    F^{ES}(x,y)
    & = 1 + \sum_{k = 0}^{\infty}
    \begin{bmatrix}
        xy & x^2y^2
    \end{bmatrix}
    \cdot
    \begin{bmatrix}
        0 & x^2y^3 \\
        xy^2 & x^2y^3
    \end{bmatrix}^k
    \cdot
    \begin{bmatrix}
        1 \\
        1
    \end{bmatrix} \nonumber\\
    & = \frac{1+xy+x^2y^2-x^2y^3+x^3y^4-x^3y^5}{1-x^2y^3-x^3y^5},
\end{align}
where we denoted the bivariate generating function for the evolutionary stable configurations by $F^{ES}(x, y)$.
\begin{remark}
    Let us again inspect what particular sequences we obtain if we look separately at the total number of evolutionary stable configurations of particular length, and the total number of evolutionary stable configurations with a prescribed number of houses. Plugging $x = 1$ into \eqref{al:BGF_ES}, we get the generating function for the  sequence $(J_n^{ES\text{-length}})_n$ which counts the total number of evolutionary stable configurations of length $n$. This sequence appears on OEIS as \href{https://oeis.org/A052920}{A052920}, and counts the compositions of an integer $n$ into parts $3$ and $5$. When we discussed (in the previous sections) these kinds of relations between sequences that appear in our model, and sequences already present on the OEIS, we did not go into details about the offset. However, here we comment on it since we want to make an additional observation about the evolutionary stable configurations. Notice first that there is only one evolutionary stable configuration of length $1$ ($1$), and only one of length $2$ ($11$). Then something interesting happens as there are no evolutionary stable configurations of length $3$. There are, however, three jammed configurations of length $3$ in the Riviera model: $011$, $101$, and $110$. The configurations $011$ and $110$ are not resistant to predators, and the configuration $101$ is not resistant to altruists. It is also easy to check that there are two evolutionary stable configurations of length $4$ ($1101$ and $1011$). There is one additional maximal configuration of length $4$ in the Riviera model ($0110$), but this one is not resistant to predators. Hence, our sequence starts with $1, 1, 0, 2$. Inspecting the sequence \href{https://oeis.org/A052920}{A052920}, we see that this means that there is an offset of $5$ present. More precisely, the first element ($1$) corresponds to the number of ways $5$ can be composed into parts $3$ and $5$ ($5 = 5$). The second element (again $1$) corresponds to $6 = 3+3$. The zero corresponds to the fact that $7$ cannot be composed from $3$ and $5$. Then we have that $8 = 3+5 = 5+3$. The question is whether there are bigger numbers than $7$ that cannot be composed from $3$ and $5$. Even though it is easy to see from the recurrence relation that the answer is no, we could apply here the famous Chicken McNugget Theorem which states that for any two relatively prime positive integers $m$ and $n$, the greatest integer that cannot be written in the form $am + bn$, for nonnegative integers $a$ and $b$, is $mn-m-n$. In our case, the greatest integer that cannot be composed into parts $3$ and $5$ is $3\cdot 5 - 3 - 5 = 7$. Translated to our problem, this means that the length $3$ is the only length for which no evolutionary stable configurations exist.

    On the other hand, plugging $y = 1$ into \eqref{al:BGF_ES} gives us the generating function for the sequence $(J_k^{ES\text{-occupancy}})_k$ which counts the total number of evolutionary stable configurations with precisely $k$ houses. This is again the famous Padovan sequence (\href{https://oeis.org/A000931}{A000931}).
\end{remark}
Now we move to the main result of this section, and this is again the explicit form for the complexity function of the evolutionary stable configurations.
\begin{theorem}\label{tm:cmpl_ES}
    The complexity function $S^{ES}(\rho)$ of evolutionary stable configurations is given by
    \begin{equation}
        S^{ES}(\rho) = (2\rho - 1) \ln(2\rho - 1) - (2 - 3\rho) \ln(2 - 3\rho) - (5\rho - 3) \ln(5\rho - 3), \quad \frac{3}{5} < \rho < \frac{2}{3}.
    \end{equation}
\end{theorem}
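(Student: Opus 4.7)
The plan is to apply the renewal-style method of \cite{KL} exactly as in the third proof of Theorem \ref{tm:cmpl_predatori}, starting from the bivariate generating function \eqref{al:BGF_ES}. Writing $q^{ES}(x,y) = 1 - x^2y^3 - x^3y^5$ for the denominator, the target formula is
$$S^{ES}(\rho) = -\rho \ln x_0 - \ln y_0,$$
where $(x_0,y_0)$ is determined by $q^{ES}(x_0,y_0) = 0$ together with the density constraint
$$\rho = \left[\frac{x}{y} \frac{\partial_x q^{ES}}{\partial_y q^{ES}}\right]_{(x_0,y_0)}.$$

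The key observation that makes the algebra painless is that both monomials in $q^{ES}$ are divisible by $xy^2$. Introducing $u = xy^2$, I would first rewrite the denominator equation as $1 = (xy)\,u\,(1+u)$ and compute the partial derivatives to get $\rho = (2+3u)/(3+5u)$, a single-variable relation. Inverting this yields $u = (2-3\rho)/(5\rho-3)$, and then $1+u = (2\rho-1)/(5\rho-3)$, so
$$u(1+u) = \frac{(2-3\rho)(2\rho-1)}{(5\rho-3)^2}, \qquad xy = \frac{(5\rho-3)^2}{(2-3\rho)(2\rho-1)}.$$

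Second, I would recover $x_0$ and $y_0$ individually from the two identities $xy^2 = u$ and $xy = 1/[u(1+u)]$, which give $y_0 = u^2(1+u)$ and $x_0 = 1/[u^3(1+u)^2]$. Substituting the expression for $u$ produces
$$y_0 = \frac{(2-3\rho)^2(2\rho-1)}{(5\rho-3)^3}, \qquad x_0 = \frac{(5\rho-3)^5}{(2-3\rho)^3(2\rho-1)^2}.$$
Taking logarithms and collecting the coefficients of $\ln(2\rho-1)$, $\ln(2-3\rho)$, and $\ln(5\rho-3)$ in $-\rho\ln x_0 - \ln y_0$ yields exactly the three-term expression in the statement.

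The density window $\tfrac{3}{5} < \rho < \tfrac{2}{3}$ is not an obstacle but should be justified separately by exhibiting extremal periodic patterns: the allowed blocks are $\{01, 011\}$ with the forbidden transition $01 \to 01$, so the densest periodic configuration is $(011)^\infty$ giving $\rho = 2/3$, while the sparsest one must alternate the two blocks, leading to $(01011)^\infty$ of density $3/5$; any intermediate density is realized by mixing. The main obstacle, if there is one, is purely bookkeeping: the denominator has higher total degree than in the predator case, so the $u$-substitution is essential to avoid an unwieldy coupled polynomial system in $x_0$ and $y_0$.
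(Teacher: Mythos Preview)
Your proposal is correct and follows exactly the approach the paper indicates: the paper in fact omits the proof of Theorem \ref{tm:cmpl_ES} entirely, stating only that it ``can be routinely computed by using the method developed in \cite{KL}'', which is precisely what you carry out. Your substitution $u=xy^2$ is the natural simplification, and the resulting expressions for $x_0$, $y_0$, and $S^{ES}(\rho)$ check out line by line.
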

\begin{remark}
    Unlike in the previous two sections, the support for the complexity function of evolutionary stable configurations does not match the support of the complexity function of the original Riviera model. The upper bound is the same, and can again be achieved by the same pattern as in the previous two sections, namely $110110110 \ldots 11011$. But notice that the pattern which achieved the density $\frac{1}{2}$ and was resistant to predators is not resistant to altruists and vice versa. However, it is again easy to see why the smallest possible density is $\frac{3}{5}$. As we already explained, the only building blocks appearing in evolutionary stable configurations are $01$ and $011$, and we have to forbid the transition from the block $01$ to itself. This implies that the sparsest evolutionary stable configuration will be the one where these two blocks alternate, i.e.\ the one with the pattern $1011010110 \ldots 1011$. It is easy to see that, in the limit, this pattern gives the density of $\frac{3}{5}$.
\end{remark}
As the complexity function of Theorem \ref{tm:cmpl_ES} can be routinely
computed by using the method developed in \cite{KL}, we omit the proof.

\begin{figure}
	\begin{subfigure}{.48\textwidth}
		\includegraphics[scale=.5]{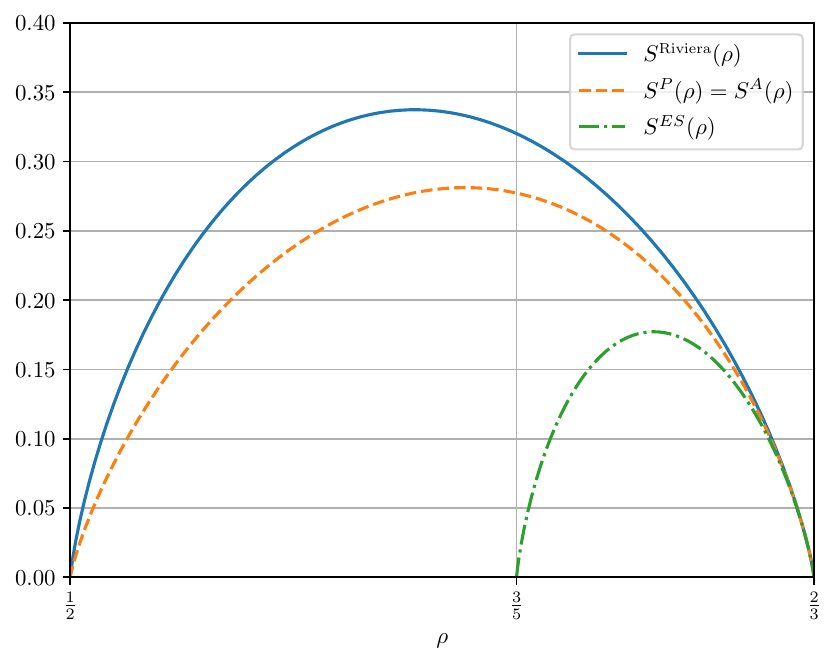}
		\caption{}
	\end{subfigure}
	\begin{subfigure}{.48\textwidth}
		\includegraphics[scale=.5]{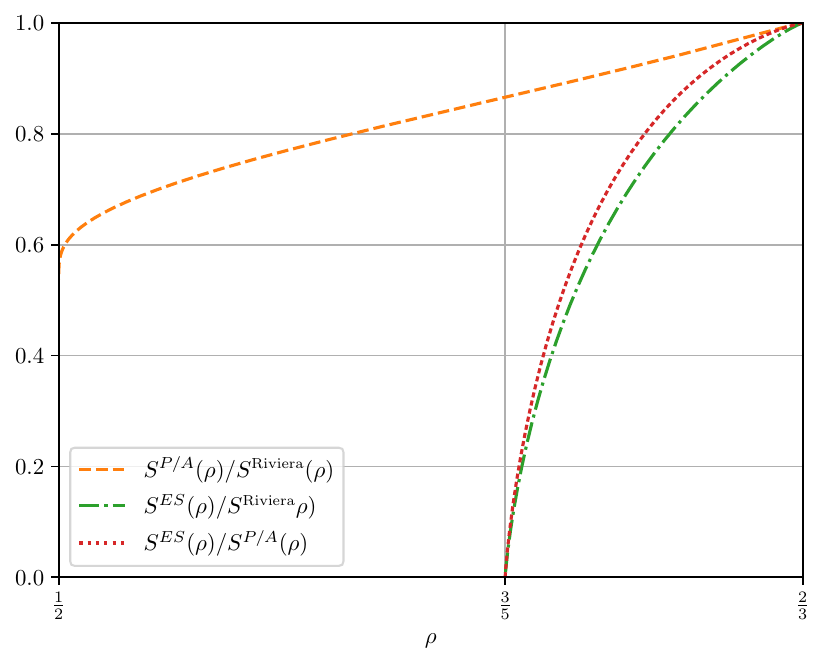}
		\caption{}
	\end{subfigure}\caption{The complexity function $S^{ES}(\rho)$ of evolutionary stable configurations compared to the complexity function of the Riviera model and the complexity function of the maximal configurations resistant to predators/altruists (denoted by $S^{P/A}(\rho)$ in the right subfigure).}\label{fig:Complexity_ALL}
\end{figure}

\begin{remark}
	The complexity function $S^{ES}(\rho)$ calculated in Theorem \ref{tm:cmpl_ES} is shown in Figure \ref{fig:Complexity_ALL} where it is compared to the complexity function of the full Riviera model, as computed in \cite{krapivsky2022jamming}, and the complexity function of the maximal configurations resistant to predators/altruists $S^{P}(\rho)=S^{A}(\rho)$, as computed in Theorems \ref{tm:cmpl_predatori} and \ref{tm:cmpl_altruisti}.
\end{remark}

\section{Two-dimensional case}\label{sec:general}
In this section, we make some observations related to maximal configurations resistant to predators, maximal configurations resistant to altruists, and evolutionary stable configurations, but in the two-dimensional case. Here, we are in the original setting in which the combinatorial settlement planning model was introduced (see \cite{PSZ-21, PSZ-21-2}). The main conclusions in this section that are related to maximal configurations resistant to predators, and to maximal configurations resistant to altruists, refer to the case when $m$ and $n$ grow to infinity. In the case of evolutionary stable configurations, we get much more precise results. Our main goal is to find $\rho_{\min}$ and $\rho_{\max}$, i.e.\ the minimal and the maximal density that can be achieved in each of the three situations. We already saw that, in one-dimensional case, maximal possible density coincides in all three situations and is equal to $\frac{2}{3}$ (which coincides with maximal reachable density in the original Riviera model). However, even though the minimal possible density in the case of maximal configurations resistant to predators, and maximal configurations resistant to altruists is the same as in the original Riviera model, namely $\frac{1}{2}$, in the case of evolutionary stable configurations, the minimal reachable density is $\frac{3}{5}$. We investigate what happens in the two-dimensional case. We show that only the maximal configurations resistant to altruists have the minimal and the maximal reachable densities that coincide with the minimal and the maximal reachable densities in the original combinatorial settlement planning model. These minimal and maximal possible densities in the original model were obtained in \cite{PSZ-21}. The minimal possible density is $\frac{1}{2}$, and the maximal possible density is $\frac{3}{4}$. When it comes to the maximal configurations resistant to predators, the density of $\frac{1}{2}$ can be achieved, but the highest possible density (in the limit) is $\frac{2}{3}$. This is somewhat surprising, because it seems (intuitively) that, if one wants to construct a maximal configuration resistant to predators, one should just have as many occupied lots as possible. It is a bit counter-intuitive that enforcing higher building density, spoils the resistance to arrival of predators. This intuition is based on thinking about tracts of land on the whole $\mathbb{Z}^2$ grid. However, it turns out that the boundary condition that we impose (that there is no obstruction to the sunlight on the boundary of the tract of land) has a very strong effect on the whole configuration. Lastly, we find that the minimal and the maximal reachable densities for evolutionary stable configurations, surprisingly, coincide and are equal to $\frac{2}{3}$. It turns out that the condition of resistance to both predators and altruists is so strong in the two-dimensional case, that we are able to completely describe all configurations that satisfy this condition.

\subsection{Maximal configurations resistant to predators}
In this subsection, we find the minimal and the maximal densities that can be achieved by the maximal configurations resistant to predators in the two-dimensional setting. Since maximal configurations resistant to predators are also maximal configurations in the original combinatorial settlement planning model, we know that the minimal reachable density is bigger than or equal to $\frac{1}{2}$, and the maximal reachable density is less than or equal to $\frac{3}{4}$. Since it turns out that the minimal density that can be reached with maximal configurations resistant to predators is precisely $\frac{1}{2}$, it is enough to identify some pattern which provides us with maximal configurations resistant to predators with density $\frac{1}{2}$. Many different patterns, that can appear in maximal configurations from the original combinatorial settlement planning model, were considered (together with their densities) in \cite{PSZ-21}, where the original model was introduced. The patterns were typically introduced on the whole $\mathbb{Z}^2$ grid, and then finite configurations obtained from those patterns were analyzed. It was very important to see how these infinite patterns restrict to finite size tracts of land, because on such finite configurations the boundary condition plays an important role. Recall that we assumed that the tracts of land are not adjacent to any other buildings, i.e.\ along the boundary of the rectangular $m \times n$ grid, there are no obstructions to sunlight. There were several patterns introduced in \cite{PSZ-21} which achieved the lowest possible density of $\frac{1}{2}$. One of these patterns was the so-called check pattern (see Figure \ref{fig:checkfigure}).
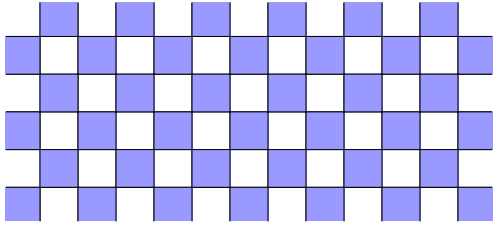
\begin{figure}
		\centering
		\begin{tikzpicture}[scale = 0.5]
		\draw[step=1cm,black,very thin] (-0.9, -0.9) grid (11.9,4.9);

		\fill[blue!40!white] (-0.9,-0.9) rectangle (0,0);
		\fill[blue!40!white] (-0.9,1) rectangle (0,2);
		\fill[blue!40!white] (-0.9,3) rectangle (0,4);

		\fill[blue!40!white] (0,0) rectangle (1,1);
		\fill[blue!40!white] (0,2) rectangle (1,3);
		\fill[blue!40!white] (0,4) rectangle (1,4.9);

		\fill[blue!40!white] (1,-0.9) rectangle (2,0);
		\fill[blue!40!white] (1,1) rectangle (2,2);
		\fill[blue!40!white] (1,3) rectangle (2,4);

		\fill[blue!40!white] (2,0) rectangle (3,1);
		\fill[blue!40!white] (2,2) rectangle (3,3);
		\fill[blue!40!white] (2,4) rectangle (3,4.9);

		\fill[blue!40!white] (3,-0.9) rectangle (4,0);
		\fill[blue!40!white] (3,1) rectangle (4,2);
		\fill[blue!40!white] (3,3) rectangle (4,4);

		\fill[blue!40!white] (4,0) rectangle (5,1);
		\fill[blue!40!white] (4,2) rectangle (5,3);
		\fill[blue!40!white] (4,4) rectangle (5,4.9);

		\fill[blue!40!white] (5,-0.9) rectangle (6,0);
		\fill[blue!40!white] (5,1) rectangle (6,2);
		\fill[blue!40!white] (5,3) rectangle (6,4);

		\fill[blue!40!white] (6,0) rectangle (7,1);
		\fill[blue!40!white] (6,2) rectangle (7,3);
		\fill[blue!40!white] (6,4) rectangle (7,4.9);

		\fill[blue!40!white] (7,-0.9) rectangle (8,0);
		\fill[blue!40!white] (7,1) rectangle (8,2);
		\fill[blue!40!white] (7,3) rectangle (8,4);

		\fill[blue!40!white] (8,0) rectangle (9,1);
		\fill[blue!40!white] (8,2) rectangle (9,3);
		\fill[blue!40!white] (8,4) rectangle (9,4.9);

		\fill[blue!40!white] (9,-0.9) rectangle (10,0);
		\fill[blue!40!white] (9,1) rectangle (10,2);
		\fill[blue!40!white] (9,3) rectangle (10,4);

		\fill[blue!40!white] (10,0) rectangle (11,1);
		\fill[blue!40!white] (10,2) rectangle (11,3);
		\fill[blue!40!white] (10,4) rectangle (11,4.9);

		\fill[blue!40!white] (11,-0.9) rectangle (11.9,0);
		\fill[blue!40!white] (11,1) rectangle (11.9,2);
		\fill[blue!40!white] (11,3) rectangle (11.9,4);

		\draw[step=1cm,black,very thin] (-0.9, -0.9) grid (11.9,4.9);
		\end{tikzpicture}
		\caption{The check pattern.}\label{fig:checkfigure}
	\end{figure}
Crucial property of the check pattern is that we can trivially obtain finite size configurations from it, by just adding houses on all the empty boundary lots (see Figure \ref{fig:checkpatternexamples}).
\begin{figure}
	\centering
		\begin{subfigure}{.5\textwidth}
			\centering
			\begin{tikzpicture}[scale = 0.5]
			\draw[step=1cm,black,very thin] (-1, 0) grid (10,4);

			\fill[blue!40!white] (-1,0) rectangle (0,1);
			\fill[blue!40!white] (-1,1) rectangle (0,2);
			\fill[blue!40!white] (-1,2) rectangle (0,3);
			\fill[blue!40!white] (-1,3) rectangle (0,4);

			\fill[blue!40!white] (0,0) rectangle (1,1);
			\fill[blue!40!white] (0,2) rectangle (1,3);

			\fill[blue!40!white] (1,0) rectangle (2,1);
			\fill[blue!40!white] (1,1) rectangle (2,2);
			\fill[blue!40!white] (1,3) rectangle (2,4);

			\fill[blue!40!white] (2,0) rectangle (3,1);
			\fill[blue!40!white] (2,2) rectangle (3,3);

			\fill[blue!40!white] (3,0) rectangle (4,1);
			\fill[blue!40!white] (3,1) rectangle (4,2);
			\fill[blue!40!white] (3,3) rectangle (4,4);

			\fill[blue!40!white] (4,0) rectangle (5,1);
			\fill[blue!40!white] (4,2) rectangle (5,3);

			\fill[blue!40!white] (5,0) rectangle (6,1);
			\fill[blue!40!white] (5,1) rectangle (6,2);
			\fill[blue!40!white] (5,3) rectangle (6,4);

			\fill[blue!40!white] (6,0) rectangle (7,1);
			\fill[blue!40!white] (6,2) rectangle (7,3);

			\fill[blue!40!white] (7,0) rectangle (8,1);
			\fill[blue!40!white] (7,1) rectangle (8,2);
			\fill[blue!40!white] (7,3) rectangle (8,4);

			\fill[blue!40!white] (8,0) rectangle (9,1);
			\fill[blue!40!white] (8,2) rectangle (9,3);

			\fill[blue!40!white] (9,0) rectangle (10,1);
			\fill[blue!40!white] (9,1) rectangle (10,2);
			\fill[blue!40!white] (9,2) rectangle (10,3);
			\fill[blue!40!white] (9,3) rectangle (10,4);

			\draw[step=1cm,black,very thin] (-1, 0) grid (10,4);
			\end{tikzpicture}
			\caption{}
		\end{subfigure}%
		\begin{subfigure}{.5\textwidth}
			\centering
			\begin{tikzpicture}[scale = 0.5]
			\draw[step=1cm,black,very thin] (-1, -1) grid (5,8);
			\fill[blue!40!white] (-1,-1) rectangle (0,0);
			\fill[blue!40!white] (-1,0) rectangle (0,1);
			\fill[blue!40!white] (-1,1) rectangle (0,2);
			\fill[blue!40!white] (-1,2) rectangle (0,3);
			\fill[blue!40!white] (-1,3) rectangle (0,4);
			\fill[blue!40!white] (-1,4) rectangle (0,5);
			\fill[blue!40!white] (-1,5) rectangle (0,6);
			\fill[blue!40!white] (-1,6) rectangle (0,7);
			\fill[blue!40!white] (-1,7) rectangle (0,8);

			\fill[blue!40!white] (0,-1) rectangle (1,0);
			\fill[blue!40!white] (0,0) rectangle (1,1);
			\fill[blue!40!white] (0,2) rectangle (1,3);
			\fill[blue!40!white] (0,4) rectangle (1,5);
			\fill[blue!40!white] (0,6) rectangle (1,7);

			\fill[blue!40!white] (1,-1) rectangle (2,0);
			\fill[blue!40!white] (1,1) rectangle (2,2);
			\fill[blue!40!white] (1,3) rectangle (2,4);
			\fill[blue!40!white] (1,5) rectangle (2,6);
			\fill[blue!40!white] (1,7) rectangle (2,8);

			\fill[blue!40!white] (2,-1) rectangle (3,0);
			\fill[blue!40!white] (2,0) rectangle (3,1);
			\fill[blue!40!white] (2,2) rectangle (3,3);
			\fill[blue!40!white] (2,4) rectangle (3,5);
			\fill[blue!40!white] (2,6) rectangle (3,7);

			\fill[blue!40!white] (3,-1) rectangle (4,0);
			\fill[blue!40!white] (3,1) rectangle (4,2);
			\fill[blue!40!white] (3,3) rectangle (4,4);
			\fill[blue!40!white] (3,5) rectangle (4,6);
			\fill[blue!40!white] (3,7) rectangle (4,8);

			\fill[blue!40!white] (4,-1) rectangle (5,0);
			\fill[blue!40!white] (4,0) rectangle (5,1);
			\fill[blue!40!white] (4,1) rectangle (5,2);
			\fill[blue!40!white] (4,2) rectangle (5,3);
			\fill[blue!40!white] (4,3) rectangle (5,4);
			\fill[blue!40!white] (4,4) rectangle (5,5);
			\fill[blue!40!white] (4,5) rectangle (5,6);
			\fill[blue!40!white] (4,6) rectangle (5,7);
			\fill[blue!40!white] (4,7) rectangle (5,8);

			\draw[step=1cm,black,very thin] (-1, -1) grid (5,8);
			\end{tikzpicture}
			\caption{}
		\end{subfigure}%
	\caption{Finite configurations obtained from the check pattern.}\label{fig:checkpatternexamples}
  \end{figure}
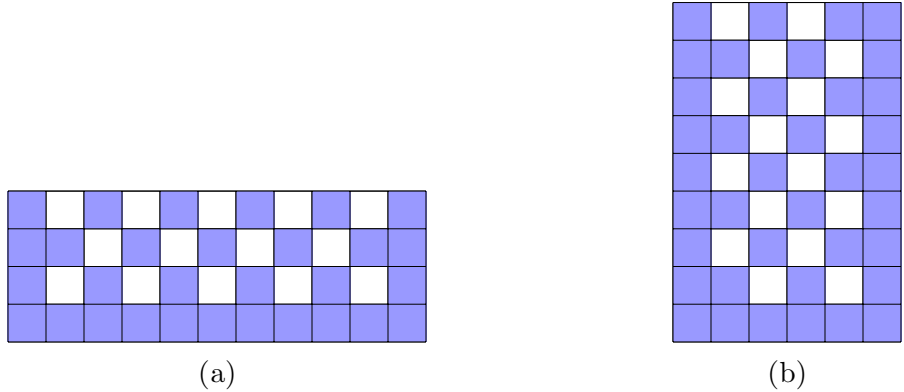
Clearly, the density achieved (in the limit) by the finite size configurations with check pattern is still $\frac{1}{2}$ since the effect of the boundary vanishes as the configuration grows. Furthermore, notice that each empty lot in these configurations has the property that there are houses on all the neighboring lots from which the sunlight can come (east, west, and south). Hence, a house built on any of the empty lots would not receive sunlight. This precisely means that such configurations are resistant to predators. Hence, we can achieve density $\frac{1}{2}$ with maximal configurations resistant to predators.

When it comes to maximal configurations resistant to predators with the highest possible density, things get much more complicated. Regarding patterns, the obvious candidate is the only pattern that exhibits the density of $\frac{3}{4}$, the so-called brick pattern (see Figure \ref{fig:brickfigure}).
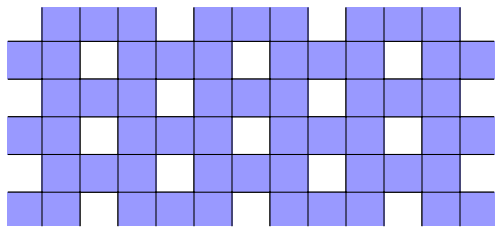
\begin{figure}
	\centering
		\begin{tikzpicture}[scale = 0.5]
		\draw[step=1cm,black, thin] (-0.9, -0.9) grid (11.9,4.9);
		\fill[blue!40!white] (-0.9,-0.9) rectangle (0,0);
		\fill[blue!40!white] (0,-0.9) rectangle (1,0);

		\fill[blue!40!white] (2,-0.9) rectangle (3,0);
		\fill[blue!40!white] (3,-0.9) rectangle (4,0);
		\fill[blue!40!white] (4,-0.9) rectangle (5,0);

		\fill[blue!40!white] (6,-0.9) rectangle (7,0);
		\fill[blue!40!white] (7,-0.9) rectangle (8,0);
		\fill[blue!40!white] (8,-0.9) rectangle (9,0);

		\fill[blue!40!white] (10,-0.9) rectangle (11,0);
		\fill[blue!40!white] (11,-0.9) rectangle (11.9,0);
		%--------------------------------------------------
		%---------------------------------------------------
		\fill[blue!40!white] (0,0) rectangle (1,1);
		\fill[blue!40!white] (1,0) rectangle (2,1);
		\fill[blue!40!white] (2,0) rectangle (3,1);

		\fill[blue!40!white] (4,0) rectangle (5,1);
		\fill[blue!40!white] (5,0) rectangle (6,1);
		\fill[blue!40!white] (6,0) rectangle (7,1);

		\fill[blue!40!white] (8,0) rectangle (9,1);
		\fill[blue!40!white] (9,0) rectangle (10,1);
            \fill[blue!40!white] (10,0) rectangle (11,1);
		%-------------------------------------------------------
		\fill[blue!40!white] (-0.9,1) rectangle (0,2);
		\fill[blue!40!white] (0,1) rectangle (1,2);

		\fill[blue!40!white] (2,1) rectangle (3,2);
		\fill[blue!40!white] (3,1) rectangle (4,2);
		\fill[blue!40!white] (4,1) rectangle (5,2);

		\fill[blue!40!white] (6,1) rectangle (7,2);
		\fill[blue!40!white] (7,1) rectangle (8,2);
		\fill[blue!40!white] (8,1) rectangle (9,2);

		\fill[blue!40!white] (10,1) rectangle (11,2);
		\fill[blue!40!white] (11,1) rectangle (11.9,2);
		%--------------------------------------------------
		\fill[blue!40!white] (0,2) rectangle (1,3);
		\fill[blue!40!white] (1,2) rectangle (2,3);
		\fill[blue!40!white] (2,2) rectangle (3,3);

		\fill[blue!40!white] (4,2) rectangle (5,3);
		\fill[blue!40!white] (5,2) rectangle (6,3);
		\fill[blue!40!white] (6,2) rectangle (7,3);

		\fill[blue!40!white] (8,2) rectangle (9,3);
		\fill[blue!40!white] (9,2) rectangle (10,3);
            \fill[blue!40!white] (10,2) rectangle (11,3);
		%---------------------------------------------------
		\fill[blue!40!white] (-0.9,3) rectangle (0,4);
		\fill[blue!40!white] (0,3) rectangle (1,4);

		\fill[blue!40!white] (2,3) rectangle (3,4);
		\fill[blue!40!white] (3,3) rectangle (4,4);
		\fill[blue!40!white] (4,3) rectangle (5,4);

		\fill[blue!40!white] (6,3) rectangle (7,4);
		\fill[blue!40!white] (7,3) rectangle (8,4);
		\fill[blue!40!white] (8,3) rectangle (9,4);

		\fill[blue!40!white] (10,3) rectangle (11,4);
		\fill[blue!40!white] (11,3) rectangle (11.9,4);
		%--------------------------------------------------
		\fill[blue!40!white] (0,4) rectangle (1,4.9);
		\fill[blue!40!white] (1,4) rectangle (2,4.9);
		\fill[blue!40!white] (2,4) rectangle (3,4.9);

		\fill[blue!40!white] (4,4) rectangle (5,4.9);
		\fill[blue!40!white] (5,4) rectangle (6,4.9);
		\fill[blue!40!white] (6,4) rectangle (7,4.9);

		\fill[blue!40!white] (8,4) rectangle (9,4.9);
		\fill[blue!40!white] (9,4) rectangle (10,4.9);
		\fill[blue!40!white] (10,4) rectangle (11,4.9);
		\draw[step=1cm,black, thin] (-0.9, -0.9) grid (11.9,4.9);
		\end{tikzpicture}
    \caption{The brick pattern.}\label{fig:brickfigure}
\end{figure}
If we look at this pattern on the whole $\mathbb{Z}^2$, it looks like a perfect candidate for maximal configuration resistant to predators, since again each empty lot is blocked from sunlight from all sides. However, restricting this pattern to a finite size tract of land will necessarily produce some empty lots in the bottom row where predators could build a house (see Figure \ref{fig:brickpatternexamples}). If we tried exchanging positions of such an empty lot in the bottom row and the house above it, we would end up with two neighboring empty lots in the configuration, which would, again, make it possible for a predator to build a house.
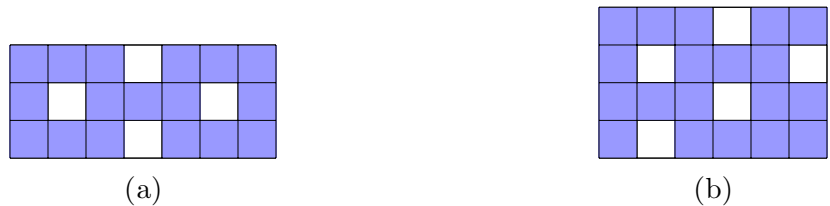
\begin{figure}
	\centering
		\begin{subfigure}{.5\textwidth}
			\centering
			\begin{tikzpicture}[scale = 0.5]
			\draw[step=1cm,black,very thin] (0, 0) grid (7,3);
			\fill[blue!40!white] (0,0) rectangle (1,1);
			\fill[blue!40!white] (0,1) rectangle (1,2);
			\fill[blue!40!white] (0,2) rectangle (1,3);

			\fill[blue!40!white] (1,0) rectangle (2,1);
			\fill[blue!40!white] (1,2) rectangle (2,3);

			\fill[blue!40!white] (2,1) rectangle (3,2);
			\fill[blue!40!white] (2,0) rectangle (3,1);
			\fill[blue!40!white] (2,2) rectangle (3,3);

			\fill[blue!40!white] (3,1) rectangle (4,2);

			\fill[blue!40!white] (4,0) rectangle (5,1);
			\fill[blue!40!white] (4,1) rectangle (5,2);
			\fill[blue!40!white] (4,2) rectangle (5,3);

			\fill[blue!40!white] (5,0) rectangle (6,1);
			\fill[blue!40!white] (5,2) rectangle (6,3);

			\fill[blue!40!white] (6,0) rectangle (7,1);
			\fill[blue!40!white] (6,1) rectangle (7,2);
			\fill[blue!40!white] (6,2) rectangle (7,3);

			\draw[step=1cm,black,very thin] (0, 0) grid (7,3);
			\end{tikzpicture}
			\caption{}\label{brickpattern1}
		\end{subfigure}%
		\begin{subfigure}{.5\textwidth}
			\centering
			\begin{tikzpicture}[scale = 0.5]
			\draw[step=1cm,black,very thin] (0, 0) grid (6,4);
			\fill[blue!40!white] (0,0) rectangle (1,1);
			\fill[blue!40!white] (0,1) rectangle (1,2);
			\fill[blue!40!white] (0,2) rectangle (1,3);
			\fill[blue!40!white] (0,3) rectangle (1,4);

			\fill[blue!40!white] (1,1) rectangle (2,2);
			\fill[blue!40!white] (1,3) rectangle (2,4);

			\fill[blue!40!white] (2,0) rectangle (3,1);
			\fill[blue!40!white] (2,1) rectangle (3,2);
			\fill[blue!40!white] (2,2) rectangle (3,3);
			\fill[blue!40!white] (2,3) rectangle (3,4);

			\fill[blue!40!white] (3,0) rectangle (4,1);
			\fill[blue!40!white] (3,2) rectangle (4,3);

			\fill[blue!40!white] (4,0) rectangle (5,1);
			\fill[blue!40!white] (4,1) rectangle (5,2);
			\fill[blue!40!white] (4,2) rectangle (5,3);
			\fill[blue!40!white] (4,3) rectangle (5,4);

			\fill[blue!40!white] (5,0) rectangle (6,1);
			\fill[blue!40!white] (5,1) rectangle (6,2);
			\fill[blue!40!white] (5,3) rectangle (6,4);

			\draw[step=1cm,black,very thin] (0, 0) grid (6,4);
			\end{tikzpicture}
			\caption{}\label{brickpattern2}
		\end{subfigure}
    \caption{Finite configurations obtained from the brick pattern.}\label{fig:brickpatternexamples}
\end{figure}
The fact that we cannot make a restriction of the brick patter resistant to predators is not without reason. As it turns out, no maximal configurations resistant to predators can (in the limit) achieve densities greater than $\frac{2}{3}$. This immediately follows from the following two lemmas.

\begin{lemma}
	The occupancy of the penultimate row of an $m\times n$ maximal configuration resistant to predators is at most $2\CEIL{\frac{n}{3}}$.
\end{lemma}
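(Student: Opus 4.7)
The plan is to reduce the two-dimensional question to a one-dimensional substring-avoidance count, and then bound the latter by a simple partition argument.

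First I would show that, in any maximal configuration resistant to predators, the bottom row (row $m$) is fully occupied. If some lot $(m,j)$ were empty, a predator could build a house there; but a house on the south boundary is never blocked, since sunlight is unobstructed from the south along that boundary, contradicting resistance to predators.

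Next I would use this to constrain row $m-1$. Consider a house at position $(m-1,j)$ with $1<j<n$: permissibility requires that at least one of the lots $(m-1,j-1)$, $(m-1,j+1)$, $(m,j)$ be empty. Since the bottom row is fully occupied, $(m,j)$ is not empty, so one of the two horizontal neighbours in row $m-1$ must be empty. Reading row $m-1$ as a binary string of length $n$, this is exactly the condition that the substring $111$ is forbidden.

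Finally, I would partition $\{1,\ldots,n\}$ into $\lceil n/3\rceil$ consecutive blocks, each of length at most $3$. Within any block of length $3$, the absence of the pattern $111$ forces at most $2$ occupied positions, and any shorter block trivially satisfies the same bound. Summing over the $\lceil n/3\rceil$ blocks yields the stated inequality. I do not anticipate a real obstacle; the only conceptual step is observing that the fully occupied bottom row collapses the sunlight condition for row $m-1$ into the purely one-dimensional rule ``no three consecutive $1$'s''.
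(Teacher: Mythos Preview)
Your proof is correct and follows essentially the same approach as the paper's: both observe that the bottom row must be fully occupied (by resistance to predators), deduce that row $m-1$ cannot contain three consecutive occupied lots (by permissibility), and conclude the bound $2\lceil n/3\rceil$. You spell out the final counting step via the block partition more explicitly than the paper does, but the argument is the same.
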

\begin{proof}
	First notice that all the lots on the western, southern and eastern border of the tract of land have to be occupied. Let us now inspect what can happen in the row immediately above the southern-most row. Since the southern-most row has all the lots occupied, we cannot have three (or more) consecutive occupied lots in the row above (this would clearly cause that at least one house in such a block is blocked from the sunlight). Hence, the highest possible occupancy that we can achieve in the penultimate row is $2\CEIL{\frac{n}{3}}$, which in the limit gives the density of $\frac{2}{3}$,
\end{proof}

Now we argue that the occupancy of the penultimate row essentially sets the upper bound for the occupancies of all the other rows above.

\begin{lemma}
	If the occupancy of the penultimate row of an $m\times n$  maximal configuration resistant to predators is $r$, then the occupancy of any other row above is at most $r+1$.
\end{lemma}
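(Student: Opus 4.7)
As preparation, I would record the row-level structural consequences of predator resistance, then derive a sharpened local bound $r_i\le r_{i+1}+1$, and finally upgrade it to the claimed global bound by observing that the local bound cannot be attained tightly in two consecutive rows. Any empty lot on the south, east or western boundary would be sunlit from outside and admit a predator, so those three boundary columns must be fully occupied. For every empty lot $(i,j)$ with $i\le m-1$, predator resistance forces $(i,j-1)=(i,j+1)=(i+1,j)=1$; in particular empties in each row are isolated, and no column is empty in two consecutive rows. Hence each row $i$ has the shape $1^{a_1}01^{a_2}0\cdots 01^{a_{k_i}}$, so $r_i=n-k_i+1$. The same argument used in the previous lemma shows, in addition, that $a_j\le 3$ when $i\le m-2$ and $a_j\le 2$ when $i=m-1$: the middle house of any length-$3$ run has both horizontal neighbours occupied, so it must be sunlit from the south, forcing an empty lot directly below---and in row $m-1$ this would contradict the fact that row $m$ is fully occupied.

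For the local bound I would rewrite $r_i\le r_{i+1}+1$ as $k_{i+1}\le k_i+1$ and proceed by counting. Since no column is empty in both rows $i$ and $i+1$, every empty of row $i+1$ sits at a column where row $i$ has a house, and hence lies inside some house-run of row $i$. I would then argue that each such run of row $i$ can contain at most one empty of row $i+1$: if it contained two, they would lie at columns $j$ and $j+2$ of a length-$3$ run (empties in row $i+1$ being pairwise non-adjacent and the run having length at most $3$), but then the middle house at column $j+1$ is flanked by two houses, must be sunlit from the south, and forces a third empty in row $i+1$ at column $j+1$---adjacent to the one at $j$ and contradicting isolation. Summing over the $k_i$ runs of row $i$ gives $k_{i+1}-1\le k_i$, i.e.\ $r_i\le r_{i+1}+1$.

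The hard part will be upgrading this to the global bound $r_i\le r+1$, because iterating $r_i\le r_{i+1}+1$ alone yields only the too-weak estimate $r_i\le r_{m-1}+(m-1-i)$. I would establish the following \emph{compensation property}: whenever the local inequality is tight, $r_i=r_{i+1}+1$, the density strictly drops at the next step going north, i.e.\ $r_{i-1}\le r_i-1$. Granted this, the north-going increments from $r_{m-1}$ to $r_1$ can only alternate $+1$ with strictly negative steps, so the running maximum of $r_i$ never exceeds $r_{m-1}+1$. To prove the compensation property I would analyse the three-row window $R_{i-1},R_i,R_{i+1}$ in the tight case: tightness between rows $i$ and $i+1$ pins down, up to a short case analysis on run lengths $1$, $2$ and $3$, where the forced empty of row $i+1$ sits inside each run of row $i$ (the middle of every length-$3$ run; either endpoint of every length-$2$ run; the unique column of every length-$1$ run). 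A symmetric inspection of the pair $(R_{i-1},R_i)$ under those constraints, combined with the middle-house rule and the isolation of empties, then forces additional empties in row $i-1$ beyond those required by the bare local bound, yielding $r_{i-1}\le r_i-1$. Since the reasoning is local in a fixed-size window with only three admissible run lengths, it reduces to a finite case check.
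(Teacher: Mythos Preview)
Your two-step plan (local bound plus a compensation property) is exactly the paper's strategy, and your formulation of the compensation property --- that a tight step going north must be followed by a strict drop --- is precisely what the paper establishes (after an index shift). Your counting argument for the local bound, via ``at most one empty of row $i+1$ under each run of row $i$'', is a correct dual to the paper's ``at least one empty of row $i-1$ between each consecutive pair of empties of row $i$''.

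The gap is in the compensation step. You describe it as reducing to a ``finite case check'' that is ``local in a fixed-size window'', but the argument is not interior-local: the two extra empties you need in row $i-1$ come from the \emph{boundary}. Concretely (in your indexing), tightness between rows $i$ and $i+1$ forces the first run of row $i$ to have length $\ge 2$: if it had length $1$ (so $(i,1)=1$, $(i,2)=0$), tightness would require an empty of row $i+1$ beneath that single column, i.e.\ $(i+1,1)=0$, contradicting the fact that the western boundary column is fully occupied. Hence the first empty of row $i$ sits in column $\ge 3$; permissibility then forces an additional empty in row $i-1$ to its left (the paper spells out the two cases $c_1=3$ and $c_1=4$). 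By east--west symmetry the same happens at the right end, yielding the two extra empties needed for $e_{i-1}\ge e_i+1$. Without singling out the first and last runs and invoking the boundary condition on columns $1$ and $n$, a purely interior window-by-window analysis cannot distinguish the tight case from the non-tight one --- indeed, on a bi-infinite strip the number of runs equals the number of empties per period and the tight case $e_{i+1}=e_i+1$ never occurs. So your sketch points in the right direction, but the crucial boundary step is missing from it.
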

\begin{proof}
	Let us denote by $\ell$ the number of empty lots in any particular row. The claim of the lemma will follow if we can show two things: firstly, that the number of empty lots in the row directly above the considered row is at least $\ell - 1$ (so the number of empty lots increased, stayed the same, or decreased by $1$), and secondly, in case the number of empty lots decreased, then the number of empty lots in the row directly above that one must again increase and be at least $\ell$. From this it trivially follows that the occupancy of the penultimate row (increased by one) really provides the upper bound for the occupancy of all the other rows above it.

	To see that the number of empty lots can decrease by at most $1$ going from one row to the next above it, we make the following key observation. Since it is not allowed to have two neighboring empty lots anywhere in a maximal configuration resistant to predators, we know that between any two nearest empty lots in the same row, there is at least one occupied lot. Furthermore, the lots immediately to the north of those two empty lots, are certainly occupied, and between those two occupied lots, there is at least one empty lot to keep the configuration permissible. Hence, each two nearest empty lots in the same row produce at least one empty lot in the row above. This implies that if the number of empty lots in a particular row is equal to $\ell$, the number of empty lots in the row above is at least $\ell - 1$.

	It only remains to be seen what happens in the case when the number of empty lots actually decreases by $1$ going from one row to the next one above. To be specific, take $i$ to be the index of the considered row with $\ell$ empty lots, and let row $i-1$ be the row directly above it with $\ell-1$ empty lots. Note that in that case there is exactly one empty lot produced in row $i-1$ from each two nearest empty lots in row $i$. The column of that empty lot in row $i-1$ is in between the columns of the two empty lots in row $i$. More importantly, there cannot be empty lots in row $i-1$ to the left of the first empty lot in row $i$, nor to the right of the last empty lot in row $i$.

	Let us first analyze the initial portion of the row $i$. It cannot start with an empty lot since predator could build a house on that lot, and it cannot start with $4$ or more occupied lots, since then both second and third occupied lot would need to get the sunlight from the south, and this would create two neighboring empty lots, which would again make it possible for a predator to build a house there.
	If the first empty lot in row $i$ is in the third column, it will produce an additional empty lot to the left in row $i-1$, see \eqref{eq:1st_empty_on_3rd}.
	\begin{equation}\label{eq:1st_empty_on_3rd}
		\begin{aligned}
			& \qquad 1 \quad 0 \quad 1 \\
			\textnormal{row $i$} \longrightarrow & \qquad 1 \quad 1 \quad 0 \quad 1 \quad \cdots
		\end{aligned}
	\end{equation}
	If the first empty lot is in the fourth column in row $i$, it, again, will produce an additional empty lot to the left in row $i-1$, see \eqref{eq:1st_empty_on_4th}. The difference between this case, and the previous one where the first empty lot was in the third column, is that it is not uniquely determined where the imposed empty lot will be. It can be on any of the two positions marked with `$x$' in \eqref{eq:1st_empty_on_4th}.
	\begin{equation}\label{eq:1st_empty_on_4th}
		\begin{aligned}
			& \qquad 1 \quad x \quad x  \quad 1 \\
			\textnormal{row $i$} \longrightarrow & \qquad 1 \quad 1 \quad 1 \quad 0 \quad 1 \quad \cdots
		\end{aligned}
	\end{equation}
	Thus, the only possibility left is that the first empty lot in row $i$ must be in the second column, see \eqref{eq:1st_empty_on_2nd}.
	\begin{equation}\label{eq:1st_empty_on_2nd}
		\begin{aligned}
			& \qquad 1 \quad 1 \\
			\textnormal{row $i$} \longrightarrow & \qquad 1 \quad 0 \quad 1 \quad \cdots
		\end{aligned}
	\end{equation}

	Due to the symmetry in the horizontal (east--west) direction, we can similarly conclude that the last empty lot in row $i$ must be in the second to last column, and only then none of them will impose an additional empty lot in row $i-1$, see \eqref{eq:1st_empty_on_2nd_and_penultimate}.
	\begin{equation}\label{eq:1st_empty_on_2nd_and_penultimate}
		\begin{aligned}
			& \qquad 1 \quad 1 \quad \phantom{1} \quad \cdots \quad \phantom{1} \quad 1 \quad 1 \\
			\textnormal{row $i$} \longrightarrow & \qquad 1 \quad 0 \quad 1 \quad \cdots \quad 1 \quad 0 \quad 1
		\end{aligned}
	\end{equation}
	Inspecting \eqref{eq:1st_empty_on_2nd_and_penultimate}, we can easily see that this will put us in the situation where in row $i-1$, the first empty lot is not on the second position, and the last empty lot is not on the second to last position. But, reasoning similarly as before, this implies that in row $i-2$ there must be an additional empty lot both to the left of the first empty lot in row $i-1$ and to the right of the last empty lot in row $i-1$. This means that the number of empty lots in row $i-2$ is at least $2+(\ell-1)-1=\ell$. We therefore return to the same occupancy that we had two rows below. Hence, if the occupancy of the penultimate row is $r$, we clearly have that the occupancy of any other row above is bounded above by $r+1$ which is exactly what we needed to show.
\end{proof}

In conclusion --- since the penultimate row cannot have the density bigger than $\frac{2}{3}$ (in the limit), neither can the other rows above it. It, hence, follows that the maximal density achievable by maximal configurations resistant to predators is bounded above by $\frac{2}{3}$. In Subsection \ref{subsec:ES_2d} we discuss evolutionary stable maximal configurations which are, in particular, resistant to predators. There, we show that they all have the same density of $\frac{2}{3}$ (see Theorem \ref{tm:ES_2d_structure}). This means that the bound above is sharp and completes the argument that $\rho_{\max} = \frac{2}{3}$ in the case of maximal configurations resistant to predators.

\subsection{Maximal configurations resistant to altruists}
The main goal of this subsection is to show that maximal configurations resistant to altruists (in two-dimensional case) can achieve the smallest and the highest density from the original combinatorial settlement planning model. As in the first part of the previous subsection, the main idea is to identify appropriate patterns which achieve those densities. It turns out that, for both the lowest and the highest possible density, one can just directly take the patterns discussed and analyzed in details in \cite{PSZ-21}.

Let us first discuss the lowest possible density that maximal configurations resistant to altruists can achieve. In \cite[Theorem 4.5]{PSZ-21}, sharp lower bounds were provided for the occupancy of any maximal configuration $C$. More precisely, it was proven that if $C$ is any maximal configuration on the $m \times n$ grid (with $m, n \ge 2$), then
\begin{equation*}
    |C| \ge \begin{cases}
		\frac{mn}{2}+2, &\text{if } n\equiv 0\Mod{4},\\
		\frac{m(n+2)}{2}, &\text{if } n\equiv 2\Mod{4},\\
		\frac{m(n+1)}{2}+1, &\text{if } n\equiv 1\Mod{2},\\
	\end{cases}
\end{equation*}
where $|C|$ is the occupancy of the configuration $C$, i.e.\ the total number of occupied lots in the configuration $C$. To prove that these bounds are sharp, the authors in \cite{PSZ-21} found a precise pattern with which this particular occupancy is achieved, and this is the so-called rake-stripe pattern (see Figure \ref{fig:comb_rake_stripe}).
\begin{figure}
		\centering
		\begin{subfigure}{.5\textwidth}
			\centering
			\begin{tikzpicture}[scale = 0.5]
			\draw[step=1cm,black,very thin] (3, 0) grid (11,6);

			\fill[blue!40!white] (3,0) rectangle (4,1);

			\fill[blue!40!white] (4,0) rectangle (5,1);
			\fill[blue!40!white] (4,1) rectangle (5,2);
			\fill[blue!40!white] (4,2) rectangle (5,3);
			\fill[blue!40!white] (4,3) rectangle (5,4);
			\fill[blue!40!white] (4,4) rectangle (5,5);
			\fill[blue!40!white] (4,5) rectangle (5,6);

			\fill[blue!40!white] (5,0) rectangle (6,1);
			\fill[blue!40!white] (5,1) rectangle (6,2);
			\fill[blue!40!white] (5,2) rectangle (6,3);
			\fill[blue!40!white] (5,3) rectangle (6,4);
			\fill[blue!40!white] (5,4) rectangle (6,5);
			\fill[blue!40!white] (5,5) rectangle (6,6);

			\fill[blue!40!white] (6,0) rectangle (7,1);
			\fill[blue!40!white] (7,0) rectangle (8,1);

			\fill[blue!40!white] (8,0) rectangle (9,1);
			\fill[blue!40!white] (8,1) rectangle (9,2);
			\fill[blue!40!white] (8,2) rectangle (9,3);
			\fill[blue!40!white] (8,3) rectangle (9,4);
			\fill[blue!40!white] (8,4) rectangle (9,5);
			\fill[blue!40!white] (8,5) rectangle (9,6);

			\fill[blue!40!white] (9,0) rectangle (10,1);
			\fill[blue!40!white] (9,1) rectangle (10,2);
			\fill[blue!40!white] (9,2) rectangle (10,3);
			\fill[blue!40!white] (9,3) rectangle (10,4);
			\fill[blue!40!white] (9,4) rectangle (10,5);
			\fill[blue!40!white] (9,5) rectangle (10,6);

			\fill[blue!40!white] (10,0) rectangle (11,1);
			\draw[step=1cm,black,very thin] (3, 0) grid (11,6);
			\end{tikzpicture}
			\caption{The rake pattern only.}
		\end{subfigure}%
		\begin{subfigure}{.5\textwidth}
			\centering
			\begin{tikzpicture}[scale = 0.5]
			\draw[step=1cm,black,very thin] (2, 0) grid (10,6);
			\fill[blue!40!white] (2,0) rectangle (3,1);
			\fill[blue!40!white] (2,1) rectangle (3,2);
			\fill[blue!40!white] (2,2) rectangle (3,3);
			\fill[blue!40!white] (2,3) rectangle (3,4);
			\fill[blue!40!white] (2,4) rectangle (3,5);
			\fill[blue!40!white] (2,5) rectangle (3,6);

			%\fill[blue!40!white] (1,0) rectangle (2,1);
			\fill[blue!40!white] (3,1) rectangle (4,2);
			%\fill[blue!40!white] (1,2) rectangle (2,3);
			\fill[blue!40!white] (3,3) rectangle (4,4);
			%\fill[blue!40!white] (1,4) rectangle (2,5);
			\fill[blue!40!white] (3,5) rectangle (4,6);

			%\fill[blue!40!white] (1,0) rectangle (2,1);
			\fill[blue!40!white] (4,1) rectangle (5,2);
			%\fill[blue!40!white] (1,2) rectangle (2,3);
			\fill[blue!40!white] (4,3) rectangle (5,4);
			%\fill[blue!40!white] (1,4) rectangle (2,5);
			\fill[blue!40!white] (4,5) rectangle (5,6);

			%\fill[blue!40!white] (1,0) rectangle (2,1);
			\fill[blue!40!white] (5,1) rectangle (6,2);
			%\fill[blue!40!white] (1,2) rectangle (2,3);
			\fill[blue!40!white] (5,3) rectangle (6,4);
			%\fill[blue!40!white] (1,4) rectangle (2,5);
			\fill[blue!40!white] (5,5) rectangle (6,6);

			%\fill[blue!40!white] (1,0) rectangle (2,1);
			\fill[blue!40!white] (6,1) rectangle (7,2);
			%\fill[blue!40!white] (1,2) rectangle (2,3);
			\fill[blue!40!white] (6,3) rectangle (7,4);
			%\fill[blue!40!white] (1,4) rectangle (2,5);
			\fill[blue!40!white] (6,5) rectangle (7,6);

			%\fill[blue!40!white] (1,0) rectangle (2,1);
			\fill[blue!40!white] (7,1) rectangle (8,2);
			%\fill[blue!40!white] (1,2) rectangle (2,3);
			\fill[blue!40!white] (7,3) rectangle (8,4);
			%\fill[blue!40!white] (1,4) rectangle (2,5);
			\fill[blue!40!white] (7,5) rectangle (8,6);

			%\fill[blue!40!white] (1,0) rectangle (2,1);
			\fill[blue!40!white] (8,1) rectangle (9,2);
			%\fill[blue!40!white] (1,2) rectangle (2,3);
			\fill[blue!40!white] (8,3) rectangle (9,4);
			%\fill[blue!40!white] (1,4) rectangle (2,5);
			\fill[blue!40!white] (8,5) rectangle (9,6);

			\fill[blue!40!white] (9,0) rectangle (10,1);
			\fill[blue!40!white] (9,1) rectangle (10,2);
			\fill[blue!40!white] (9,2) rectangle (10,3);
			\fill[blue!40!white] (9,3) rectangle (10,4);
			\fill[blue!40!white] (9,4) rectangle (10,5);
			\fill[blue!40!white] (9,5) rectangle (10,6);

			\draw[step=1cm,black,very thin] (2, 0) grid (10,6);
			\end{tikzpicture}
			\caption{The stripe pattern only.}
		\end{subfigure}
		\begin{subfigure}{.5\textwidth}
			\centering
			\begin{tikzpicture}[scale = 0.5]
			\draw[step=1cm,black,very thin] (3, 0) grid (11,6);

			\fill[blue!40!white] (3,0) rectangle (4,1);
			\fill[blue!40!white] (3,1) rectangle (4,2);

			\fill[blue!40!white] (4,1) rectangle (5,2);
			\fill[blue!40!white] (4,2) rectangle (5,3);
			\fill[blue!40!white] (4,3) rectangle (5,4);
			\fill[blue!40!white] (4,4) rectangle (5,5);
			\fill[blue!40!white] (4,5) rectangle (5,6);

			\fill[blue!40!white] (5,1) rectangle (6,2);
			\fill[blue!40!white] (5,2) rectangle (6,3);
			\fill[blue!40!white] (5,3) rectangle (6,4);
			\fill[blue!40!white] (5,4) rectangle (6,5);
			\fill[blue!40!white] (5,5) rectangle (6,6);

			\fill[blue!40!white] (6,1) rectangle (7,2);
			\fill[blue!40!white] (7,1) rectangle (8,2);

			\fill[blue!40!white] (8,1) rectangle (9,2);
			\fill[blue!40!white] (8,2) rectangle (9,3);
			\fill[blue!40!white] (8,3) rectangle (9,4);
			\fill[blue!40!white] (8,4) rectangle (9,5);
			\fill[blue!40!white] (8,5) rectangle (9,6);

			\fill[blue!40!white] (9,1) rectangle (10,2);
			\fill[blue!40!white] (9,2) rectangle (10,3);
			\fill[blue!40!white] (9,3) rectangle (10,4);
			\fill[blue!40!white] (9,4) rectangle (10,5);
			\fill[blue!40!white] (9,5) rectangle (10,6);

			\fill[blue!40!white] (10,0) rectangle (11,1);
			\fill[blue!40!white] (10,1) rectangle (11,2);
			\draw[step=1cm,black,very thin] (3, 0) grid (11,6);
			\end{tikzpicture}
			\caption{The combination of the two patterns.}
		\end{subfigure}
	\caption{Examples of the rake pattern, the stripe pattern, and the combination of the two, on the tract of land with dimensions $6 \times 8$.}\label{fig:comb_rake_stripe}
	\end{figure}
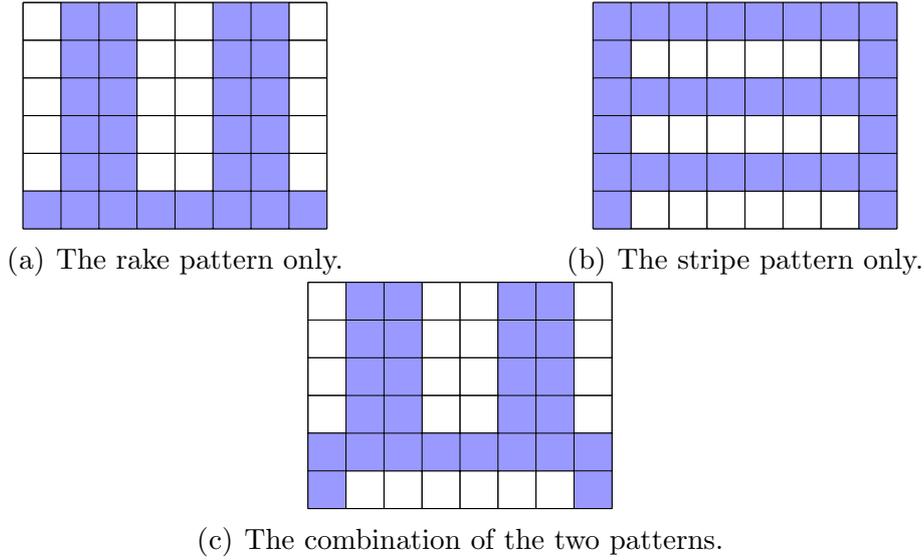
It is easy to see that each of the three patterns displayed in Figure \ref{fig:comb_rake_stripe} produce density $\frac{1}{2}$ in the limit, and it is also obvious that each empty lot in those three patterns has a property that it is the only source of light to one of its neighbors. This means that all three patterns are resistant to altruists. Hence, not only can we achieve the density $\frac{1}{2}$ with maximal configurations resistant to altruists, but even the maximal configurations with theoretically the lowest possible occupancy in the combinatorial settlement model introduced in \cite{PSZ-21} are resistant to altruists.

When it comes to the highest possible density achievable by maximal configurations resistant to altruists, we can just take maximal configurations composed of the so-called brick pattern. It is true that finite configurations obtained from the brick pattern can exhibit empty lots on the boundary, which is a problem when allowing predators. However, those configurations still have the crucial property that every empty lot is the only source of light to at least one of its neighbors (see Figure \ref{fig:brickpatternexamples}), which directly implies that such configurations are resistant to altruists. Notice that the brick pattern is composed in such a way that every empty lot is the only source of light to each of its neighbors --- to the east, west and north (see Figure \ref{fig:brickfigure}). Since the restriction of the brick pattern on a finite size tract of land, starts with the western-most column completely occupied, empty boundary lots can only appear on the southern, and the eastern boundary. Those on the southern boundary will still be the only source of light to its neighbor to the north, and those on the eastern boundary will still be the only source of light to its western neighbor.

\subsection{Evolutionary stable configurations}\label{subsec:ES_2d}
Notice that in the case when the number of rows ($m$) is equal to $2$, the two-dimensional model reduces to the model on a strip of land that we already analyzed in Section \ref{sec:ES}. More precisely, all the lots in the bottom row are necessarily occupied (to counteract the arrival of predators), and this immediately blocks the sunlight from the south to all the lots in the upper row. Hence, the situation in the upper row entirely corresponds to the situation in the one-dimensional version of the model. With the case $m\le 2$ completely understood, we now move on to the case $m>2$. When $n\le 2$ the only maximal configuration is the one with all lots occupied and that configuration is also ES. For $m,n>2$ the situation is much more interesting. It turns out that there are no ES configurations on an $m\times n$ grid unless $n$ is divisible by $3$ and $m$ is an odd number. If these two conditions are met then we can completely describe all the ES configurations and they all have the same occupancy of $\frac23 mn + \frac12 (m-1) + \frac13 n$, see Theorem \ref{tm:ES_2d_structure}, which, in the limit, results with the density of $\frac{2}{3}$.

\begin{remark}\label{rem:ES_bricked_up_border}
	Note that each $m\times n$ ES configuration is completely bricked up along its eastern, western and southern border, i.e.\ $C_{i,j}=1$ whenever $j\in\{1,n\}$ or $i=m$. Otherwise, a predator would gladly take up any empty lot along those borders.
\end{remark}

\begin{remark}\label{rem:no_side_adjecent_empty}
	Note that each empty lot in any ES configuration must be surrounded on all four sides by occupied lots. Equivalently, there are no (side-)adjacent empty lots in ES configurations, as a predator could take up one of them.
\end{remark}

Before we are in position to prove Theorem \ref{tm:ES_2d_structure}, describing the structure of two-dimensional ES configurations, we will need to establish a number of preparatory lemmas.
\begin{lemma}\label{lm:finite_forbiden_I_II}
	If $C$ is an $m\times n$ ES configuration, then a finite constellation of either of the types shown in Figure \ref{fig:forbidden_constellations_I_II} must not appear anywhere in $C$.
	\begin{figure}
		\begin{subfigure}{\textwidth}
			$$\begin{array}{cccc|ccc|c|ccc|ccc}
				0 & * & * & * & * & * & * & \dots & * & * & * & * & * & * \\
				* & \underline{0} & * & * & * & * & * & \dots & * & * & * & 1 & 1 & 1 \\
				* & * & 0 & 1 & 1 & 0 & 1 & \dots & 1 & 0 & 1 & 1 & 0 & 1 \\
				& & & \multicolumn{9}{c}{\underbrace{\hspace{10em}}_{k\text{-times}}} & &
			\end{array}$$
			\caption{Type I constellation.}
		\end{subfigure}
		\begin{subfigure}{\textwidth}
			$$\begin{array}{cccc|ccc|c|ccc|ccc}
				* & \underline{0} & * & * & * & * & * & \dots & * & * & * & 1 & 1 & 1 \\
				0 & * & 0 & 1 & 1 & 0 & 1 & \dots & 1 & 0 & 1 & 1 & 0 & 1 \\
				& & & \multicolumn{9}{c}{\underbrace{\hspace{10em}}_{k\text{-times}}} & &
			\end{array}$$
			\caption{Type II constellation.}
		\end{subfigure}
		\caption{Forbidden constellations of Type I and II. The inner length $3$ blocks may not be present, or can be repeated any finite number of times in either of the configuration types ($k\ge 0$).}\label{fig:forbidden_constellations_I_II}
	\end{figure}
\end{lemma}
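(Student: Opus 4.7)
The plan is to show both forbidden constellations by induction on $k \ge 0$, treating Types I and II simultaneously. The key tools are the two ES consequences already recorded in Remarks \ref{rem:ES_bricked_up_border} and \ref{rem:no_side_adjecent_empty}, together with the full altruist-resistance condition: for any empty lot $(r,c)$ in an ES configuration, adding a house at $(r,c)$ must completely block the sunlight of at least one of its neighboring houses $(r-1,c)$, $(r,c-1)$, $(r,c+1)$ (ignoring any of these that happens to lie on a boundary giving free sunlight).

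For the base case $k=0$ the analysis starts at the underlined $\underline{0}$, say at position $(i,j)$. Predator-resistance immediately forces $C_{i,j-1} = C_{i,j+1} = C_{i+1,j} = 1$. For altruist-resistance at $(i,j)$ there are three candidate blocked neighbours, and two of them can be excluded: blocking the north house $(i-1,j)$ would require $C_{i-1,j-1}=1$, which fails in Type I because of the explicit $0$ at $(i-1,j-1)$ and fails in Type II after a short separate argument using the extra $0$ at $(i+1,j_0)$; blocking the east house $(i,j+1)$ is excluded because the $0$ at $(i+1,j+1)$ keeps it in sunlight. The only remaining possibility forces $C_{i,j-2}=1$ and $C_{i+1,j-1}=1$. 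From the east end I would then invoke the permissibility of the leftmost $1$ inside the $1,1,1$ block of the middle (top) row: since its east and south neighbours are already houses, its west neighbour is forced to be an empty lot, which serves as a new anchor from which predator- and altruist-resistance can be reapplied, propagating forced values leftward along the middle (top) row. Combining the leftward chain with the forcings arising from the $0$s in the bottom row eventually conflicts with the prescribed $0$s at the west end of the constellation.

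For the inductive step I would show that a length-$k$ constellation contains, after shifting the viewing window three columns to the right, a length-$(k-1)$ constellation of Type I or II. Concretely, the first inner $1,1,0$ block becomes the new left end, and the ES constraints applied at the first intermediate empty lot in the bottom row verify that the shifted window satisfies the structural conditions of a shorter forbidden constellation, contradicting the inductive hypothesis.

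The main obstacle will be the combinatorial case enumeration for altruist-resistance at each empty lot, since at every step up to three candidate blocked houses have to be compared and many wildcards forced to specific values. For Type II, particular care is needed because the constellation only contains two rows: the ruling-out of the north blocking option has to be performed under the assumption that a row above in the grid may or may not exist, and the slightly different left-end layout (with the extra $0$ at $(i+1,j_0)$) requires a separate but parallel enumeration of the altruist-blocking witnesses.
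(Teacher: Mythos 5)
Your overall strategy (induction on $k$, i.e.\ a minimal-counterexample argument driven by predator-resistance, altruist-resistance and permissibility forcings) is in the right spirit, but both halves of the plan contain concrete errors. In the base case your treatment of Type II is backwards: for the underlined zero $(i,j)$ of a Type II constellation, the given zeros at $(i+1,j-1)$ and $(i+1,j+1)$ keep the west and east neighbours lit from the south, so those two blocking options are excluded and altruist resistance \emph{forces} the north option $C_{i-1,j-1}=C_{i-1,j+1}=1$; there is no ``short separate argument'' excluding north-blocking via the zero at $(i+1,j-1)$, and your conclusion $C_{i+1,j-1}=1$ flatly contradicts the constellation itself. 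Also, the contradiction you aim for (``conflict with the prescribed zeros at the west end'' via a leftward chain from the $111$ block) is not where the contradiction lives: already for $k=0$ in Type I, the forcing $C_{i,j+2}=0$ (permissibility of the leftmost $1$ of the $111$ block) gives nothing further by itself, since altruist resistance at the empty lot $(i,j+2)$ is satisfied by blocking $(i,j+3)$; the actual contradiction comes from \emph{above}: one shows $C_{i-1,j+1}=0$ and then altruist resistance at $(i-1,j+1)$ forces a row $i-2$ with three consecutive occupied lots sitting over occupied lots, violating permissibility. Your sketch provides no mechanism for the leftward chain to keep propagating, and no role for the rows above, which are essential.

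The inductive step is the more serious gap. It is simply not true that a length-$k$ constellation contains a length-$(k-1)$ constellation in the window shifted three columns to the right: for Type I the shifted window would need zeros at $(i-1,j+2)$ and $(i,j+3)$, and for Type II a zero at $(i+1,j+2)$; but $C_{i+1,j+2}=1$ is part of the given bottom-row pattern, and the value that actually gets forced in the analysis is $C_{i-1,j+2}=1$ (it is $C_{i-1,j+1}$ that turns out to be empty). Moreover, the ``ES constraints applied at the first intermediate empty lot in the bottom row'' force nothing about row $i$: altruist resistance at $(i+1,j+4)$ can be satisfied by blocking its west or east neighbour using lots in row $i+2$, below the constellation, so no zero in row $i$ or $i-1$ is produced. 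The genuine reduction is considerably longer: one first shows a minimal counterexample must be of Type I, because a Type II constellation forces (via the north-blocking just discussed and two permissibility steps) a strictly shorter Type I; then, for Type I, one propagates forced values rightward along rows $i-1$ and $i$ across the whole constellation, where each step either exhibits a strictly shorter Type I (using the hypothesis on smaller $k$) or the propagation terminates before the terminal $111$/$101$ block, at which point altruist resistance forces three consecutive occupied lots in row $i-2$ whose position marks the start of a strictly shorter Type II located one row higher. A single three-column window shift cannot substitute for this case analysis, so as written the induction does not close.
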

\begin{proof}
	We argue by contradiction. Let us assume that such a constellation appears in $C$, and let $(i,j)$ denote the position in $C$ which matches the underlined zero in such a constellation. We may further assume that the constellation under consideration is such that the number $k$ of repetitions of the inner length $3$ block is the least possible. Note that this means that the constellation under consideration is necessarily of Type I as having a constellation of Type II implies that, locally, the configuration must look like this
	$$\begin{array}{cccc|ccc|c|ccc|ccc}
		1 & 1 & 1 & 0 & * & * & * & \dots & * & * & * & * & * & * \\
		1 & \underline{0} & 1 & 1 & \mathbf{0} & * & * & \dots & * & * & * & 1 & 1 & 1 \\
		0 & 1 & 0 & 1 & 1 & 0 & 1 & \dots & 1 & 0 & 1 & 1 & 0 & 1
	\end{array}$$
	which would contradict the minimality of $k$, as the boldface zero marks the beginning of the Type I configuration which has one repetition of the inner block less than the initial constellation.

	Let us now consider what the surroundings of the constellation of Type I must look like. Clearly $C_{i-1,j}=C_{i,j+1}=1$. Next, $C_{i-1,j+1}=0$ as $C_{i-1,j+1}=1$ would, otherwise, imply $C_{i-1,j+2}=C_{i,j+3}=0$ which would, again, contradict the minimality of $k$.

	Next, $C_{i,j+2}=1$, as otherwise $C_{i,j+2}=0$, together with resistance to altruists, would imply the existence of the row $i-2$ with $C_{i-2,j}=C_{i-2,j+1}=C_{i-2,j+2}=1$ which would, along with $C_{i-2,j-1}=1$, contradict permissibility.

	Next, $C_{i,j+3}=0$ and $C_{i-1,j+2}=C_{i-1,j+3}=C_{i,j+4}=1$. As $C_{i-1,j+4}=1$ would imply $C_{i-1,j+5}=C_{i,j+6}=C_{i+1,j+7}=0$ which would, again contradict the minimality of $k$, we are forced to conclude $C_{i-1,j+4}=0$, and hence $C_{i-1,j+5}=1$. The conclusions so far produce the following constellation:
	$$\begin{array}{cccc|ccc|c|ccc|ccc}
		0 & 1 & 0 & 1 & 1 & 0 & 1 & \dots & * & * & * & * & * & * \\
		* & \underline{0} & 1 & 1 & 0 & 1 & * & \dots & * & * & * & 1 & 1 & 1 \\
		* & * & 0 & 1 & 1 & 0 & 1 & \dots & 1 & 0 & 1 & 1 & 0 & 1
	\end{array}$$

	Now if $C_{i,j+5}=1$, the reasoning from the previous paragraph can be repeated substituting $j$ for $j+3$. We inductively conclude that, locally, $C$ must look like as follows:
	$$\begin{array}{cccc|ccc|c|ccc|ccc|c|ccc}
		0 & 1 & 0 & 1 & 1 & 0 & 1 & \dots & 1 & 0 & 1 & 1 & 0 & 1 & \dots & * & * & * \\
		* & \underline{0} & 1 & 1 & 0 & 1 & 1 & \dots & 0 & 1 & 1 & 0 & 1 & 0 & \dots & 1 & 1 & 1 \\
		* & * & 0 & 1 & 1 & 0 & 1 & \dots & 1 & 0 & 1 & 1 & 0 & 1 & \dots & 1 & 0 & 1\\
		& & & \multicolumn{9}{c}{\underbrace{\hspace{10em}}_{l\text{-times}}} & \multicolumn{2}{c}{}
	\end{array}$$
	Note that we must, at some point, have $C_{i,j+3(l+1)+2}=0$ as the rightmost block of length $3$ implies that $C_{i,n-3}=0$.

	Next, since the $C$ is resistant to altruists, the row $i-2$ must exist, and necessarily $C_{i-2,j+3(l+1)}=C_{i-2,j+3(l+1)+1}=C_{i-2,j+3(l+1)+2}=1$ and, clearly, $C_{i-2,j-1}=C_{i-2,j+1}=1$ and $C_{i-2,j}=0$.
	$$\begin{array}{cccc|ccc|c|ccc|ccc|c|ccc}
		1 & \mathbf{0} & 1 & * & * & * & * & \dots & * & * & * & 1 & 1 & 1 & \dots & * & * & * \\
		0 & 1 & 0 & 1 & 1 & 0 & 1 & \dots & 1 & 0 & 1 & 1 & 0 & 1 & \dots & * & * & * \\
		* & \underline{0} & 1 & 1 & 0 & 1 & 1 & \dots & 0 & 1 & 1 & 0 & 1 & 0 & \dots & 1 & 1 & 1 \\
		* & * & 0 & 1 & 1 & 0 & 1 & \dots & 1 & 0 & 1 & 1 & 0 & 1 & \dots & 1 & 0 & 1\\
		& & & \multicolumn{9}{c}{\underbrace{\hspace{10em}}_{l\text{-times}}} & \multicolumn{2}{c}{}
	\end{array}$$
	But this, again, leads us to a contradiction with minimality of $k$ as the boldface zero marks the beginning of the constellation of Type II which is clearly shorter than the initial constellation. As all the possibilities have been exhausted, this concludes the proof of the lemma.
\end{proof}

\begin{lemma}\label{lm:forbiden_III_IV}
	If $C$ is an $m\times n$ ES configuration, then neither of the two constellations shown in Figure \ref{fig:forbidden_constellations_III_IV} is to appear anywhere in $C$.
	\begin{figure}
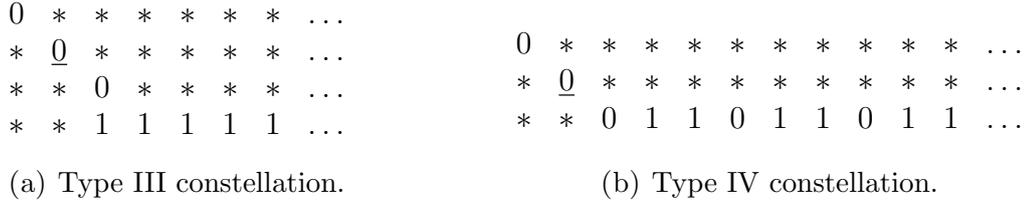

		\centering
		\begin{subfigure}{.48\textwidth}
			$$\begin{array}{cccccccc}
				0 & * & * & * & * & * & * & \dots \\
				* & \underline{0} & * & * & * & * & * & \dots \\
				* & * & 0 & * & * & * & * & \dots \\
				* & * & 1 & 1 & 1 & 1 & 1 & \dots
			\end{array}$$
		\caption{Type III constellation.}
		\end{subfigure}
		\hfill
		\begin{subfigure}{.48\textwidth}
			$$\begin{array}{cccccccccccc}
				0 & * & * & * & * & * & * & * & * & * & * & \dots \\
				* & \underline{0} & * & * & * & * & * & * & * & * & * & \dots \\
				* & * & 0 & 1 & 1 & 0 & 1 & 1 & 0 & 1 & 1 & \dots
			\end{array}$$
			\caption{Type IV constellation.}
		\end{subfigure}
	\caption{Forbidden constellations of Type III and IV. It is assumed that in both constellations the pattern in the bottom row repeats indefinitely (with period $1$ for Type III or period $3$ for Type IV) all the way to the right border of the configuration.}\label{fig:forbidden_constellations_III_IV}
	\end{figure}
\end{lemma}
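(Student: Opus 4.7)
The plan is to derive both exclusions by contradiction, with Lemma \ref{lm:finite_forbiden_I_II} serving as the ultimate source of the contradiction. The common feature of the Type III and Type IV constellations is the diagonal of empty lots $(r-2, c), (r-1, c+1), (r, c+2)$, which is exactly the germ that heads every forbidden pattern from Lemma \ref{lm:finite_forbiden_I_II}. The strategy is to show that the extra structure assumed in Type III or Type IV forces enough additional entries to complete this germ into a full Type I or Type II constellation.

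I would treat Type IV first. Let $r$ denote the bottom row of the constellation, so that the zeros of the periodic $011$ pattern sit at positions $(r, c+2+3k)$ for $k = 0, 1, 2, \ldots$, continuing up to the eastern border, which by Remark \ref{rem:ES_bricked_up_border} is fully occupied. Resistance to altruists applied to each such empty lot forces its filling to block at least one of three candidate customers: the house at $(r, c+1+3k)$, at $(r, c+3+3k)$, or at $(r-1, c+2+3k)$. A careful case analysis, analogous in spirit to the one used in the proof of Lemma \ref{lm:finite_forbiden_I_II} and invoking Remark \ref{rem:no_side_adjecent_empty} to exclude side-adjacent empty lots, should eliminate all branches except the one that forces row $r-1$ to acquire, block by block, precisely the entries needed to complete a Type I or Type II constellation together with the eastern border, producing the contradiction.

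For Type III, I would first observe that the all-$1$'s row of the constellation must coincide with the southernmost row $m$: a long stretch of consecutive ones in any other row forces every interior house of the stretch to rely solely on its southern neighbor for sunlight, which in turn forces a matching stretch of empty lots in the row below, impossible by Remark \ref{rem:no_side_adjecent_empty}. With $r+1 = m$, resistance to altruists applied to the empty lot $(m-1, c+2)$ leaves three sub-cases. The sub-case in which the blocked house would be $(m-2, c+2)$ immediately contradicts $(m-2, c+1) = 0$ from the constellation. The sub-case in which the blocked house would be $(m-1, c+3)$ forces $(m-1, c+2), (m-1, c+3), (m-1, c+4)$ to form the block $0, 1, 1$; iterating the same reasoning propagates the $011$ pattern indefinitely to the east, which is precisely a Type IV constellation with bottom row $m-1$, hence already ruled out. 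The last sub-case, in which the blocked house is $(m-1, c+1)$, is handled by propagating analogous constraints westward or upward, eventually reaching the west or top boundary of the grid and yielding a contradiction with the bricked-up boundary from Remark \ref{rem:ES_bricked_up_border}.

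The main obstacle is the case analysis itself: several branches must be eliminated at each step of the eastward and westward propagations, and one must consistently combine Remark \ref{rem:no_side_adjecent_empty}, the bricked-up boundary from Remark \ref{rem:ES_bricked_up_border}, and (for Type III) the already-established Type IV case to close off alternatives. Fortunately, the $3$-periodicity of the $011$ block keeps the branching bounded, so that the inductive propagation of a single block is essentially enough to settle the general case.
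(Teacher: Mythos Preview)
Your overall strategy --- reducing to the Type I/II patterns of Lemma \ref{lm:finite_forbiden_I_II} --- is the right one, but the place where you apply altruist resistance is misdirected and the argument stalls. For Type IV you propose to use the empty lots in the $011$ row $r=i+1$ itself. For $k\ge 1$ the empty lot $(i+1,j+1+3k)$ has west neighbour $(i+1,j+3k)$ with west $=1$ (from the pattern), so whether filling it blocks that neighbour depends only on the \emph{south} entry $(i+2,j+3k)$; the same goes for the east neighbour. Hence two of your three branches live entirely in row $i+2$, about which the Type IV constellation says nothing, and you have no mechanism to eliminate them and force information into row $r-1$. The same problem wrecks the Type III iteration: once you have established $0,1,1,0$ at columns $j+1,\dots,j+4$ in row $m-1$, the next empty lot $(m-1,j+4)$ \emph{automatically} resists altruists (filling it blocks $(m-1,j+3)$, whose west and south are both $1$), so ``iterating the same reasoning'' yields no new constraint and the $011$ pattern need not propagate. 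Your westward ``last sub-case'' is too vague to rescue the argument.

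The paper avoids this by two devices you are missing. First, it treats Types III and IV simultaneously and fixes the \emph{rightmost} column $j$ at which either occurs; this extremal choice is what kills the branch $C_{i-1,j+1}=1$ (it would create a fresh Type III/IV further right). Second, it works one row \emph{up}: it shows $C_{i-1,j+1}=0$ and applies altruist resistance there, which forces either three consecutive $1$'s in row $i-2$ (terminating in a Type II pattern) or a rightward step $C_{i,j+2}=C_{i-1,j+3}=1$ that reproduces the same situation three columns over. Finiteness guarantees eventual termination, and the accumulated entries in rows $i-2,i-1,i$ then form a Type II constellation, contradicting Lemma \ref{lm:finite_forbiden_I_II}. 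The essential missing idea in your plan is to push the analysis upward into row $i-1$ rather than trying to squeeze information out of the periodic bottom row.
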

\begin{proof}
	We argue by contradiction. Let us assume that either of those constellations appears in $C$, and let $(i,j)$ denote the position in $C$ which matches the underlined zero of the said constellation. Thus $C_{i-1,j-1}=C_{i,j}=C_{i+1,j+1}=0$.

	We may further assume that $j$ is chosen the largest possible, i.e.\ this is the rightmost position where such a constellation (either of Type III or IV) appears in $C$.

	The following conclusions are immediate $C_{i-1,j}=C_{i,j+1}=1$. Also $C_{i-1,j+1}=0$ as  $C_{i-1,j+1}=1$ would otherwise imply $C_{i-1,j+2}=C_{i,j+3}=C_{i+1,j+4}=0$ and this would contradict the maximality of $j$.

	Next, $C_{i-1,j+2}=C_{i+1,j+2}=1$. Since $C$ is resistant to altruists, either $C_{i-2,j}=C_{i-2,j+1}=C_{i-2,j+2}=1$, or $C_{i,j+2}=C_{i-1,j+3}=1$. We later argue what happens in the former case. In the latter case we get $C_{i,j+3}=0$, then $C_{i,j+4}=C_{i+1,j+3}=1$ and $C_{i+1,j+4}=0$. Now we find ourselves in a similar situation as before. It must be that $C_{i-1,j+4}=0$ as $C_{i-1,j+4}=1$ would otherwise imply $C_{i-1,j+5}=C_{i,j+6}=C_{i+1,j+7}=0$ and this would contradict the maximality of $j$.

	Next, $C_{i-1,j+5}=C_{i+1,j+5}=1$. Since $C$ is resistant to altruists, either $C_{i-2,j+3}=C_{i-2,j+4}=C_{i-2,j+5}=1$, or $C_{i,j+5}=C_{i-1,j+6}=1$. We later argue what happens in the former case. In the latter we can inductively repeat the argument.

	At some point this has to stop as the configuration is finite, and then we end up in the `former case' branch of the argument, and we are forced to conclude that the row $i-2$ above our constellation exists and the portion of $C$ to the right of the position $(i,j)$ must be of the following form:
	$$\begin{array}{cccc|ccc|ccc|c|ccc}
		1 & \mathbf{0} & 1 & * & *&*&* & *&*&* & \quad\dots\quad~ & 1&1&1 \\
		0 & 1 & 0 & 1 & 1&0&1 & 1&0&1 & \quad\dots\quad~ & 1&0&1 \\
		* & \underline{0} & 1 & 1 & 0&1&1 & 0&1&1 & \quad\dots\quad~ & 0&1&* \\
		* & * & 0 & 1 & 1&0&1 & 1&0&1 & \quad\dots\quad~ & 1&0&1
	\end{array}$$
	But this is a contradiction as the boldface zero marks the beginning of the constellation of Type II which is forbidden by Lemma \ref{lm:finite_forbiden_I_II}. This concludes the proof of the lemma.
\end{proof}

\begin{remark}\label{rm:symmetry}
	Both Lemma \ref{lm:finite_forbiden_I_II} and Lemma \ref{lm:forbiden_III_IV} have mirror versions obtained by reflecting all the statements and arguments horizontally via East$\leftrightarrow$West involution.
\end{remark}

\begin{lemma}\label{lm:begining_penultimate_row}
	If $C$ is an $m\times n$ ES configuration, where $m,n>2$, then $C_{m-1,2}=C_{m-1,n-1}=0$.
\end{lemma}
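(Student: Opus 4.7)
By the east-west reflection symmetry of the ES property (Remark \ref{rm:symmetry}), it suffices to prove $C_{m-1,2}=0$; the equality $C_{m-1,n-1}=0$ follows by reflecting the configuration. The plan is to argue by contradiction, assuming $C_{m-1,2}=1$, and to derive a cascade of forced values near the south-west corner that ultimately produces a forbidden Type III constellation from Lemma \ref{lm:forbiden_III_IV}.

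The cascade proceeds as follows. Since the southern and western borders are fully bricked up (Remark \ref{rem:ES_bricked_up_border}), permissibility of $(m-1,2)$ forces $C_{m-1,3}=0$, and predator-resistance at $(m-1,3)$ then forces $C_{m-1,4}=1$. Next, $C_{m-2,2}=0$: otherwise permissibility would give $C_{m-2,3}=0$, and a predator could then build at $(m-2,3)$ using south sunlight through the empty $(m-1,3)$. Predator-resistance at $(m-2,2)$ yields $C_{m-2,3}=1$. The most delicate step is altruist-resistance at $(m-2,2)$: one must verify that the only occupied neighbor which an invading altruist at $(m-2,2)$ could possibly block is $(m-3,2)$, since the western neighbor $(m-2,1)$ is on the boundary, the neighbor $(m-1,2)$ is affected only on its north side which carries no sunlight, and $(m-2,3)$ still receives south light through $(m-1,3)=0$; this forces $C_{m-3,2}=C_{m-3,3}=1$. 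From here the remaining steps are mechanical: permissibility at $(m-3,3)$ gives $C_{m-3,4}=0$; predator-resistance at $(m-3,4)$ gives $C_{m-2,4}=1$; permissibility at $(m-2,4)$ gives $C_{m-2,5}=0$; predator-resistance at $(m-2,5)$ gives $C_{m-1,5}=1$; and permissibility at $(m-1,5)$ gives $C_{m-1,6}=0$. The diagonal $C_{m-3,4}=C_{m-2,5}=C_{m-1,6}=0$ sitting above the all-ones bottom row $m$ is precisely the Type III constellation of Lemma \ref{lm:forbiden_III_IV} with the underlined zero at $(m-2,5)$, giving the desired contradiction.

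The main cascade assumes $m\ge 4$ and $n\ge 6$, so a handful of small cases must be handled separately. When $n=3$ the position $(m-1,2)=1$ has all three relevant neighbors $(m-1,1)$, $(m-1,3)$, $(m,2)$ occupied (the east one by the border), contradicting permissibility outright. When $m=3$ the row $m-3$ does not exist and the altruist-resistance step collapses: an altruist can invade $(1,2)=C_{m-2,2}=0$ freely, since the only house that could conceivably be blocked is $(1,3)$, but it still receives south light through $(2,3)=C_{m-1,3}=0$. For $m\ge 4$ and $n\in\{4,5\}$ the cascade collides with the eastern border before reaching column $6$: at $n=4$ the forced value $C_{m-3,4}=0$ conflicts with the border $C_{m-3,4}=1$, while at $n=5$ the forced house at $(m-2,4)$ ends up surrounded on all three sunlight sides by occupied lots ($(m-2,3)$, $(m-2,5)$ on the border, $(m-1,4)$) and is therefore blocked. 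I expect the careful bookkeeping in the altruist-resistance check at $(m-2,2)$ to be the principal subtlety of the proof; the rest of the cascade is forced step-by-step by permissibility and predator-resistance.
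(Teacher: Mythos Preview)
Your proof is correct and follows essentially the same approach as the paper's: assume $C_{m-1,2}=1$, deduce a cascade of forced values in the south-west corner, and arrive at the Type~III constellation with underlined zero at $(m-2,5)$, contradicting Lemma~\ref{lm:forbiden_III_IV}. Your write-up is in fact more careful than the paper's in two respects: you justify each forced value explicitly (the paper simply asserts the chain $C_{m-1,3}=0$, $C_{m-1,4}=C_{m-2,3}=1$, $C_{m-2,2}=0$, etc.), and you handle the boundary cases $m=3$ and $n\in\{3,4,5\}$ separately, whereas the paper leaves these implicit.
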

\begin{proof}
	Let us first assume, conversely, that $C_{m-1,2}=1$. By Remark \ref{rem:ES_bricked_up_border} we know that that the lower left corner of $C$ looks like
	$$\begin{array}{ccccc}
		1 & * & * & * & \dots \\
		1 & 1 & * & * & \dots \\
		1 & 1 & 1 & 1 & \dots
	\end{array}$$
	Next, $C_{m-1,3}=0$, then $C_{m-1,4}=C_{m-2,3}=1$ and $C_{m-2,2}=0$. Resistance to altruists next implies existence of the row $m-3$ and $C_{m-3,1}=C_{m-3,2}=C_{m-3,3}=1$. This finally implies $C_{m-3,4}=C_{m-2,5}=C_{m-1,6}=0$ and the configuration is as follows:
	$$\begin{array}{cccccccc}
		1 & 1 & 1 & 0 & * & * & * & \dots \\
		1 & 0 & 1 & 1 & \mathbf{0} & * & * & \dots \\
		1 & 1 & 0 & 1 & 1 & 0 & * & \dots \\
		1 & 1 & 1 & 1 & 1 & 1 & 1 & \dots
	\end{array}$$
	Note that the boldface zero marks the beginning of the constellation of Type III which is forbidden by Lemma \ref{lm:forbiden_III_IV}. This contradiction means that $C_{m-1,2}=0$ after all.

	By the symmetry argument (Remark \ref{rm:symmetry}) we can similarly argue that $C_{m-1,n-1}=0$, thus completing the proof.
\end{proof}

\begin{lemma}\label{lm:no_one_apart}
	Let $C$ be an $m\times n$ ES configuration, where $m>2$. No two empty lots in the row $m-1$ (the second row from the bottom) are separated by a single occupied lot.
\end{lemma}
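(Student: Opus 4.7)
The plan is to propagate the assumed pattern $C_{m-1,j}=0,\;C_{m-1,j+1}=1,\;C_{m-1,j+2}=0$ rightward through forced values until it collides either with a forbidden constellation from Lemma \ref{lm:forbiden_III_IV} or with a border condition. The reasoning is entirely local and proceeds by a short chain of ``permissibility forces this, so altruist/predator resistance forces that'' deductions.

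First, I would nail down the immediate vicinity in row $m-2$: at columns $j,j+1,j+2$ it must read $1,0,1$. The outer entries are $1$ because $(m-1,j)$ and $(m-1,j+2)$ are empty, and Remark \ref{rem:no_side_adjecent_empty} forbids vertically adjacent empty lots. The middle entry is $0$ because, if $(m-2,j+1)$ were occupied, its three sunlight-bearing neighbors $(m-2,j)$, $(m-2,j+2)$, and $(m-1,j+1)$ would all be occupied, violating permissibility.

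Next, I would place an imaginary altruist on the newly-identified empty lot $(m-2,j+1)$. Its only neighbor whose sunlight is genuinely at risk is $(m-3,j+1)$, since $(m-2,j)$ and $(m-2,j+2)$ both draw sunlight from the south (via $(m-1,j)=(m-1,j+2)=0$). If $m=3$, there is no row $m-3$ at all and the altruist builds freely, contradicting the ES property; hence $m\ge 4$ and blocking the altruist forces $C_{m-3,j}=C_{m-3,j+1}=C_{m-3,j+2}=1$ so that $(m-3,j+1)$ becomes light-starved after the altruist's arrival. From here the chain continues: permissibility of $(m-3,j+2)$ gives $C_{m-3,j+3}=0$; predator resistance at $(m-3,j+3)$ forces $C_{m-3,j+4}=C_{m-2,j+3}=1$; using $C_{m-1,j+3}=1$ (Remark \ref{rem:no_side_adjecent_empty}) and permissibility of $(m-2,j+3)$ gives $C_{m-2,j+4}=0$; Remark \ref{rem:no_side_adjecent_empty} then gives $C_{m-1,j+4}=1$; and permissibility of that last lot forces $C_{m-1,j+5}=0$.

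The three zeros $(m-3,j+3)$, $(m-2,j+4)$, $(m-1,j+5)$ now sit diagonally above a run of 1's in row $m$ extending all the way to the eastern border --- precisely the Type III forbidden constellation from Lemma \ref{lm:forbiden_III_IV}, which is the desired contradiction. The main obstacle will be boundary bookkeeping: when $j$ lies close to the eastern border, one or more of the chained positions runs off the grid or lands on a lot forced to be $1$ by the bricked-border rule (Remark \ref{rem:ES_bricked_up_border}) or on $(m-1,n-1)$ which must equal $0$ by Lemma \ref{lm:begining_penultimate_row}. In each such degenerate case the value demanded by the chain clashes directly with the border, short-circuiting to a contradiction without needing the full Type III pattern. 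Verifying that each of these boundary sub-cases terminates cleanly is the delicate part of the argument.
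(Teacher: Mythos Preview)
Your proposal is correct and follows essentially the same approach as the paper: deduce the $1,0,1$ pattern in row $m-2$, use altruist-resistance at the central empty lot to force $m\ge4$ and three consecutive $1$'s in row $m-3$, then propagate a diagonal of zeros down to row $m-1$ and invoke the Type~III constellation of Lemma~\ref{lm:forbiden_III_IV}. The paper's proof is terser (it jumps directly to the three diagonal zeros without your intermediate predator-resistance and permissibility steps) and does not comment on the boundary sub-cases you flag, but the argument is the same.
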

\begin{proof}
	Let us assume this did happen around the position $(m-1,j)$ (underlined in the diagram below), i.e.\ $C_{m-1,j-1}=C_{m-1,j+1}=0$ and $C_{m-1,j}=1$. Since $m>2$ we conclude $C_{m-2,j-1}=C_{m-2,j+1}=1$ and $C_{m-2,j}=0$. The resistance of $C$ to altruists implies $m>3$ and $C_{m-3,j-1}=C_{m-3,j}=C_{m-3,j+1}=1$. This finally leads to $C_{m-3,j+2}=C_{m-2,j+3}=C_{m-1,j+4}=0$ as in the diagram below:
	$$\begin{array}{ccccccccc}
		\dots & 1 & 1 & 1 & 0 & * & * & * & \dots \\
		\dots & 1 & 0 & 1 & 1 & \mathbf{0} & * & * & \dots \\
		\dots & 0 & \underline{1} & 0 & 1 & 1 & 0 & * & \dots \\
		\dots & 1 & 1 & 1 & 1 & 1 & 1 & 1 & \dots
	\end{array}$$
	The boldface zero above marks the beginning of the constellation of Type III which is forbidden by Lemma \ref{lm:forbiden_III_IV}. A contradiction.
\end{proof}

A direct consequence of Lemma \ref{lm:begining_penultimate_row} and Lemma \ref{lm:no_one_apart}, after taking into account that no three occupied lots are allowed in the penultimate row (row $m-1$), is the following proposition.
\begin{proposition}\label{prop:bottom_two_rows}
	Let $C$ be an $m\times n$ ES configuration, where $m,n>2$. Then $n$ is divisible by $3$ and the bottom two rows are necessarily of the form
	$$\begin{array}{ccc|ccc|c|ccc}
	1 & 0 & 1 & 1 & 0 & 1 & \dots & 1 & 0 & 1 \\
	1 & 1 & 1 & 1 & 1 & 1 & \dots & 1 & 1 & 1
	\end{array}$$
\end{proposition}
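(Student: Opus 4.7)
The plan is to assemble the two preceding lemmas with elementary permissibility reasoning to completely pin down the bottom two rows. First I would record the boundary facts: by Remark~\ref{rem:ES_bricked_up_border} the bottom row is fully occupied and the corner lots $C_{m-1,1}$ and $C_{m-1,n}$ are also occupied, and by Lemma~\ref{lm:begining_penultimate_row} we have $C_{m-1,2}=C_{m-1,n-1}=0$. Thus row $m-1$ is known to begin with $1\,0$ and end with $0\,1$.

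The next step is to pin down the spacing of empty lots along row $m-1$. On the one hand, two empty lots in that row cannot be side-adjacent by Remark~\ref{rem:no_side_adjecent_empty}, and they cannot be separated by exactly one occupied lot by Lemma~\ref{lm:no_one_apart}; so the column-distance between any two consecutive empty lots in row $m-1$ is at least $3$. On the other hand, since row $m$ consists entirely of houses, any block of three consecutive occupied lots in row $m-1$ would leave the middle house with its east, west, and south neighbors all occupied, contradicting permissibility. Combining the two bounds forces the column-distance between consecutive empty lots in row $m-1$ to be \emph{exactly} three.

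Finally, I would close the argument by arithmetic: starting from the empty lot at column $2$ and stepping by $3$, the empty lots in row $m-1$ occupy columns $2,5,8,\dots$, and this progression must terminate at column $n-1$. Equality $n-1=2+3(k-1)$ for some positive integer $k$ forces $n=3k$, proving $3\mid n$, and the resulting filled-in row $m-1$ reads $(1\,0\,1)(1\,0\,1)\cdots(1\,0\,1)$ with $n/3$ blocks, which is precisely the claimed pattern.

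I do not anticipate a genuine obstacle at this stage: all of the delicate combinatorial work has been discharged in the proofs of Lemmas~\ref{lm:finite_forbiden_I_II}--\ref{lm:no_one_apart}, and the present proposition is essentially a harvest step that turns those local obstructions into a rigid global description of the bottom two rows. The only point requiring mild care is the ``no three consecutive occupied lots in row $m-1$'' observation, but it is uniform across the row because the southern neighbors are guaranteed to be occupied by Remark~\ref{rem:ES_bricked_up_border}.
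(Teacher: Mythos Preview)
Your proposal is correct and follows essentially the same approach as the paper, which presents this proposition as a direct consequence of Lemma~\ref{lm:begining_penultimate_row}, Lemma~\ref{lm:no_one_apart}, and the observation that no three consecutive occupied lots can appear in row $m-1$. Your write-up simply makes explicit the short permissibility and arithmetic steps that the paper leaves to the reader.
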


\begin{lemma}\label{lm:forbiden_V}
	If $C$ is an $m\times n$ ES configuration, then the constellation shown in Figure \ref{fig:forbidden_constellation_V} must not appear anywhere in $C$.
	\begin{figure}
		$$\begin{array}{ccccccccccc}
			\dots & * & * & * & * & \underline{1} & * & * & * & * & \dots \\
			\dots & * & * & * & * & * & * & * & * & * & \dots \\
			\dots & 1 & 0 & 1 & 1 & 0 & 1 & 1 & 0 & 1 & \dots
		\end{array}$$
		\caption{Forbidden constellation of Type V. It is assumed that the pattern in the bottom row repeats indefinitely (with period $3$) all the way from the left to the right border of the configuration.}\label{fig:forbidden_constellation_V}
	\end{figure}
\end{lemma}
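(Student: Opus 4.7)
The plan is to derive a contradiction by showing that a Type V constellation at position $(i,j)$ forces the appearance of a Type IV constellation (or its mirror) elsewhere in the configuration, in direct conflict with Lemma \ref{lm:forbiden_III_IV}. The key idea is that once we fix the underlined $1$ at $(i,j)$ sitting two rows above the empty lot $(i+2,j)$ in a periodic row, a short chain of forced values in rows $i$ and $i+1$ produces an empty diagonal of the exact shape demanded by Type IV, with the rest of its bottom row supplied by the assumed periodic pattern stretching to one of the borders.

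I would execute the argument as follows. Since $C_{i+2,j}=0$, Remark \ref{rem:no_side_adjecent_empty} immediately forces $C_{i+1,j}=1$. As the house at $(i,j)$ now has an occupied south neighbor, permissibility forces at least one of $C_{i,j-1}$, $C_{i,j+1}$ to equal $0$. Working modulo the horizontal East$\leftrightarrow$West involution of Remark \ref{rm:symmetry}, I would assume $C_{i,j+1}=0$ (a later paragraph handles the case where this choice is incompatible with the border). Then four more applications of no-adjacent-empties and permissibility pin down the remaining lots:
\begin{equation*}
    C_{i+1,j+1}=1, \quad C_{i+1,j+2}=0, \quad C_{i,j+2}=1,
\end{equation*}
where the first and third come from Remark \ref{rem:no_side_adjecent_empty} (applied to the potential vertical pair $(i,j+1),(i+1,j+1)$ and the potential horizontal pair $(i,j+1),(i,j+2)$ respectively), and the middle one follows from permissibility of $(i+1,j+1)$, whose south $C_{i+2,j+1}=1$ and west $C_{i+1,j}=1$ are both occupied.

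Finally, the three empty lots $(i,j+1)$, $(i+1,j+2)$, $(i+2,j+3)$ sit on a diagonal, and the value $C_{i+2,j+3}=0$ comes for free from the periodicity of row $i+2$; moreover, row $i+2$ restricted to columns $j+3, j+4, \dots, n$ reads $0,1,1,0,1,1,\dots$, which is exactly the bottom row required by a Type IV constellation based at $(i,j+1)$. This contradicts Lemma \ref{lm:forbiden_III_IV}. The only remaining subtlety, which I expect to be the main point requiring care rather than a true obstacle, is handling the boundary: if $j$ is the rightmost $0$-column so that $j+3>n$, then automatically $C_{i,j+1}=C_{i,n}=1$ by Remark \ref{rem:ES_bricked_up_border}, so permissibility forces $C_{i,j-1}=0$ and the symmetric argument to the west produces the mirror image of the Type IV constellation, which is forbidden by combining Remark \ref{rm:symmetry} with Lemma \ref{lm:forbiden_III_IV}. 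Since the bricked-up side borders always leave one direction available, every case leads to a contradiction and the lemma follows.
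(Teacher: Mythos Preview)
Your argument is correct and follows essentially the same route as the paper: deduce $C_{i+1,j}=1$, use permissibility at $(i,j)$ to get (up to reflection) $C_{i,j+1}=0$, then force $C_{i+1,j+1}=1$ and $C_{i+1,j+2}=0$, and recognise the resulting diagonal of zeros together with the periodic row $i+2$ as a Type~IV constellation, contradicting Lemma~\ref{lm:forbiden_III_IV}. Your extra deduction $C_{i,j+2}=1$ is correct but unnecessary (that entry is a $*$ in Type~IV), and your explicit treatment of the border case is a mild refinement of the paper's bare ``without loss of generality'' appeal to the East--West involution.
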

\begin{proof}
	We argue by contradiction. Let us assume that the constellation of Type V does appear in $C$, and let $(i,j)$ denote the position in $C$ which matches the underlined $1$ in the said constellation. Clearly, $C_{i+1,j}=1$. It is forbidden that both $C_{i,j-1}=C_{i,j+1}=1$, so let us assume, without loss of generality that $C_{i,j+1}=0$. Next, $C_{i+1,j+1}=1$ and $C_{i+1,j+2}=0$. The configuration $C$ is forced to look like:
	$$\begin{array}{cccccccccccccc}
		\dots & * & * & * & * & \underline{1} & 0 & * & * & * & * & * & * & \dots \\
		\dots & * & * & * & * & 1 & 1 & \mathbf{0} & * & * & * & * & * & \dots \\
		\dots & 1 & 0 & 1 & 1 & 0 & 1 & 1 & 0 & 1 & 1 & 0 & 1 & \dots
	\end{array}$$
	Note that the boldface zero above marks the beginning of Type IV constellation which is forbidden by Lemma \ref{lm:forbiden_III_IV}. A contradiction.
\end{proof}

\begin{proposition}\label{prop:odd_rows_from_the bottom}
	Let $C$ be an $m\times n$ ES configuration, where $m,n>2$. Then all the rows $m-1-2k$, for $k\ge0$ (which are present) look exactly the same as the row $m-1$:
	$$101\,101\,101\,\dots\,101$$
\end{proposition}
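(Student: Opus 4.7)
The proof proceeds by induction on $k$, with the base case $k = 0$ being precisely Proposition \ref{prop:bottom_two_rows}. For the inductive step, fix $k \ge 1$ such that row $m - 1 - 2k$ is present, and assume the statement for $k - 1$: namely, that row $m + 1 - 2k$ already has the form $101\,101\,\dots\,101$. Writing $i := m - 1 - 2k$, the task is to reconstruct row $i$ from the known periodic form of row $i + 2 = m + 1 - 2k$, making no assumption on the intermediate row $i + 1$.

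The inductive step splits into two parts. First, I locate the empty lots in row $i$: for every column $j$ with $j \equiv 2 \pmod{3}$ and $2 \le j \le n - 1$, I claim $C_{i, j} = 0$. Otherwise some such $j$ would have $C_{i, j} = 1$, and then the three consecutive rows $i$, $i + 1$, $i + 2$ exhibit, at column $j$, the Type V constellation of Figure \ref{fig:forbidden_constellation_V} (the top row supplying the forbidden underlined $1$, the middle row being unconstrained, and the bottom row being exactly the periodic string handed over by the inductive hypothesis). This contradicts Lemma \ref{lm:forbiden_V}, or its east$\leftrightarrow$west mirror from Remark \ref{rm:symmetry} if the rightward deduction chain in that proof would otherwise run off the eastern border. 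Second, I fill in the occupied lots: Remark \ref{rem:ES_bricked_up_border} gives $C_{i, 1} = C_{i, n} = 1$, and for any interior column $j$ with $j \not\equiv 2 \pmod{3}$, exactly one of the neighbours $j - 1, j + 1$ is congruent to $2 \pmod{3}$ and hence empty by the first part; if $C_{i, j}$ were also $0$, row $i$ would contain two side-adjacent empty lots, contradicting Remark \ref{rem:no_side_adjecent_empty}. Combining the two parts, row $i$ coincides with $101\,101\,\dots\,101$, closing the induction.

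The main obstacle I anticipate is making sure the application of Lemma \ref{lm:forbiden_V} is legitimate at every interior column $j \equiv 2 \pmod{3}$ in row $i+2$. The lemma's proof cascades deductions in only one of the two horizontal directions (via Type IV and then Type II constellations), so near a vertical wall only one of the two orientations is available. At $j = 2$ the boundary value $C_{i, 1} = 1$ forces exactly the rightward branch that appears in the proof of Lemma \ref{lm:forbiden_V}; at $j = n - 1$ the symmetric condition $C_{i, n} = 1$ forces the leftward branch supplied by Remark \ref{rm:symmetry}; and for every intermediate $j$ either orientation works. Once this applicability has been secured, the remainder of the argument is a clean residue check modulo $3$ together with the no-adjacent-empty rule, and the induction closes without further subtleties.
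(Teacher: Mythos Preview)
Your proof is correct and follows essentially the same route as the paper's: induction on $k$ with base case Proposition~\ref{prop:bottom_two_rows}, then Lemma~\ref{lm:forbiden_V} to force zeros in the columns $j\equiv 2\pmod 3$ of row $m-1-2k$, followed by Remark~\ref{rem:no_side_adjecent_empty} (and Remark~\ref{rem:ES_bricked_up_border} for the border columns) to fill in the ones. Your additional discussion of the boundary applicability of Lemma~\ref{lm:forbiden_V} is more cautious than the paper, which simply relies on the ``without loss of generality'' symmetry in that lemma's proof, but it does no harm.
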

\begin{proof}
	We argue inductively on $k$. For $k=0$, the statement was proven in Proposition \ref{prop:bottom_two_rows}. Let us now assume that for some $k\ge 0$ the row $m-1-2k$ is
	$$101\,101\,101\,\dots\,101$$
	and let us consider the row $m-1-2k-2$ (assuming it is present).
	By Lemma \ref{lm:forbiden_V} this row must match all the empty lots from the row $m-1-2k$ and is therefore
	$$1\,0*\,\,*\,0*\,\,*\,0*\,\dots\,*0\,1$$
	but by Remark \ref{rem:no_side_adjecent_empty} it immediately follows that the row $m-1-2k-2$ also looks like
	$$101\,101\,101\,\dots\,101$$
	which completes the inductive step.
\end{proof}

The following theorem now completely characterizes all the $m\times n$ ES configurations for $m,n>2$.

\begin{theorem}\label{tm:ES_2d_structure}
	Let $C$ be an $m\times n$ ES configuration for $m,n>2$. Then $n$ is divisible by $3$ and $m$ is odd, and all the ES configurations have the same structure as shown in the constellation bellow:
	\begin{equation}\label{eq:ES_2d_structure}
	\begin{array}{cccccccccccccccc}
		1 & 1 & * & * & 1 & * & * & 1 & * & \dots & * & 1 & * & * & 1 & 1 \\
		1 & 0 & 1 & 1 & 0 & 1 & 1 & 0 & 1 & \dots & 1 & 0 & 1 & 1 & 0 & 1 \\
		\vdots & \vdots & \vdots & \vdots & \vdots & \vdots & \vdots & \vdots & \vdots & \vdots & \vdots & \vdots & \vdots & \vdots & \vdots & \vdots \\
		1 & 0 & 1 & 1 & 0 & 1 & 1 & 0 & 1 & \dots & 1 & 0 & 1 & 1 & 0 & 1 \\
		1 & 1 & * & * & 1 & * & * & 1 & * & \dots & * & 1 & * & * & 1 & 1 \\
		1 & 0 & 1 & 1 & 0 & 1 & 1 & 0 & 1 & \dots & 1 & 0 & 1 & 1 & 0 & 1 \\
		1 & 1 & 1 & 1 & 1 & 1 & 1 & 1 & 1 & \dots & 1 & 1 & 1 & 1 & 1 & 1
	\end{array}
	\end{equation}
	where exactly one in each pair of the adjacent $*$ lots is occupied, and the other is empty. As a consequence, all the ES configurations have the same occupancy of $mn-\frac{(m-1)(2n-3)}{6} = \frac{2}{3}mn+\frac{1}{2}(m-1)+\frac{1}{3}n$.
\end{theorem}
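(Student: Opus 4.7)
The plan is to leverage Proposition \ref{prop:odd_rows_from_the bottom}, which already identifies every other row counting up from the bottom (rows $m-1,m-3,\ldots$) as being of the ``$101\,101\cdots 101$'' form, together with Proposition \ref{prop:bottom_two_rows}, which gives that $n$ is divisible by $3$. What remains is to characterise the remaining (``structure'') rows, rule out even $m$, and count the occupancy.

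I would first establish, as a lemma, the following column-by-column description of any row $i$ sitting directly above a ``$101\cdots 101$'' row (i.e., $i\le m-1$ and row $i+1$ has that form). By Remark \ref{rem:ES_bricked_up_border}, $C_{i,1}=C_{i,n}=1$. Then $C_{i,2}=1$, for otherwise $(i,2)$ and $(i+1,2)$ would be vertically adjacent empties (violating Remark \ref{rem:no_side_adjecent_empty}). The pair $(C_{i,3},C_{i,4})$ cannot be $(0,0)$ (horizontally adjacent empties), and cannot be $(1,1)$ either, because then $(i,3)$ would have east $=C_{i,4}=1$, west $=C_{i,2}=1$, south $=C_{i+1,3}=1$, and hence be blocked. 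So exactly one of $C_{i,3},C_{i,4}$ equals $1$. Finally $C_{i,5}=1$: the value $C_{i,5}=0$ combined with $C_{i,4}=0$ produces horizontally adjacent empties, whereas $C_{i,5}=0$ combined with $C_{i,4}=1$ produces a vertically adjacent empty pair with $C_{i+1,5}=0$. Iterating this three-column block argument across the full width yields the form $1\,1\,*\,*\,1\,*\,*\,1\,\cdots\,*\,1\,1$ with exactly one occupied lot in each $*$ pair.

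Next, I would rule out even $m$ by contradiction. If $m$ were even, Proposition \ref{prop:odd_rows_from_the bottom} would force row $1$ to be of ``$101\cdots 101$'' form, and the previous lemma would force row $2$ to be a structure row. For each empty position $(1,j)$ of row $1$, i.e.\ $j\in\{2,5,\ldots,n-1\}$, resistance to altruists must block an altruist from building there. A direct check of which neighbours could become blocked by such a build shows that this amounts to: $C_{2,3}=1$ (from $(1,2)$), $C_{2,n-2}=1$ (from $(1,n-1)$), and $C_{2,j-1}=1$ or $C_{2,j+1}=1$ at each interior empty position $j\in\{5,8,\ldots,n-4\}$. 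Starting from $C_{2,3}=1$ and using the one-per-pair rule for row $2$, we would get $C_{2,4}=0$; the constraint at $(1,5)$ then forces $C_{2,6}=1$, hence $C_{2,7}=0$; inductively, $C_{2,3k}=1$ and $C_{2,3k+1}=0$ for every $k=1,\ldots,n/3-1$. At $k=n/3-1$ this produces $C_{2,n-2}=0$, contradicting the constraint from $(1,n-1)$.

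Therefore $m$ is odd. The structure rows are then exactly the odd-indexed rows $1,3,\ldots,m-2$ (each sits directly above a ``$101\cdots 101$'' row, so the lemma applies), and each has $\frac{2n}{3}+1$ occupied lots. The ``$101\cdots 101$'' rows at even indices $2,4,\ldots,m-1$ contribute $\frac{2n}{3}$ each, while the bottom row contributes $n$. Summing, $|C|=\tfrac{m-1}{2}\bigl(\tfrac{2n}{3}+1\bigr)+\tfrac{m-1}{2}\cdot\tfrac{2n}{3}+n$, which simplifies to $\tfrac{2}{3}mn+\tfrac{1}{2}(m-1)+\tfrac{1}{3}n=mn-\tfrac{(m-1)(2n-3)}{6}$. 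I expect the main obstacle to be the careful bookkeeping of the altruist-constraint propagation in the $m$-even case: the blocking and permissibility checks for each potentially affected neighbour must be done rigorously to ensure that both the endpoint forcings and the interior disjunctions are exactly as claimed; once that is in hand, the rest is immediate from the already-established propositions.
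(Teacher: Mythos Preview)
Your proposal is correct and follows essentially the same line as the paper's proof: the structure rows are pinned down from Proposition~\ref{prop:odd_rows_from_the bottom} (you make this step explicit, whereas the paper just says it ``follows immediately''), and the exclusion of even $m$ is obtained by propagating the altruist constraints along row~$2$ to a contradiction at the east end, which is exactly the paper's $R\ldots L$ argument rephrased as a direct induction. The occupancy count matches as well.
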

\begin{proof}
	The structure of the ES configuration follows immediately from Proposition \ref{prop:odd_rows_from_the bottom}. The only thing remained to argue is that $m$, the number of rows, must be odd. Otherwise, the top two rows would be:
	$$\begin{array}{cccccccccccccccc}
	1 & 0 & 1 & 1 & 0 & 1 & 1 & 0 & 1 & \dots & 1 & 0 & 1 & 1 & 0 & 1 \\
	1 & 1 & * & * & 1 & * & * & 1 & * & \dots & * & 1 & * & * & 1 & 1
	\end{array}$$
	where exactly one in each pair of the adjacent $*$ lots is occupied, and the other is empty. Let us encode these choices by a string of letters $L$ and $R$ depending whether the empty lot of the pair is the left or the right one. Note that resistance to altruists implies $C_{2,3}=C_{2,n-2}=1$ so the sequence starts with $R$ and ends with $L$. Consequently, there exists a position in the string $R\dots L$ where the letters $R$ and $L$ are consecutive, and in that order. The corresponding part of the configuration would then have to look like this:
	$$\begin{array}{cccccccccccccccc}
		\dots & 1 & 0 & 1 & 1 & \mathbf{0} & 1 & 1 & 0 & 1 & \dots \\
		\dots & 1 & 1 & 1 & 0 & 1 & 0 & 1 & 1 & 1 & \dots
	\end{array}$$
	But this a contradiction as an altruist would gladly take up the empty lot indicated by the boldface zero above.
\end{proof}

Now that we completely described all the two dimensional ES configurations, it is only natural to ask how many are there. Naively, one might think that there are $2^{\frac{(m-1)(n-3)}{6}}$ of them, as it seems that for each of $\frac{(m-1)(n-3)}{6}$ adjacent pairs of $*$'s, we have to choose which lot is empty and which is occupied. But the truth turns out to be a bit more complicated than that.

\begin{theorem}
	The number of ES configurations on the $m\times n$ grid, when $m,n>2$, $3\mid n$ and $2\mid (m-1)$ is the same as the number of $M\times N$ grids, where $M=\frac{m-1}{2}$ and $N=\frac{n-3}{3}$, formed of letters $L$ and $R$ where the forbidden constellations are:
	$$\begin{array}{cc}
		R & *\\
		R & L
	\end{array}\quad, \quad \begin{array}{cc}
		* & L\\
		R & L
	\end{array}\quad, \quad \begin{array}{cc}
		| & L\\
		| & L
	\end{array}\quad, \quad \begin{array}{cc}
		R & |\\
		R & |
	\end{array}\quad .$$
	Each of the first two constellations stands for two different forbidden patterns, while the third forbids adjacent $L$'s in first column (along the left, west, border) and the fourth forbids adjacent $R$'s in last column (along the right, east, border).
\end{theorem}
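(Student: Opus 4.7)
By Theorem~\ref{tm:ES_2d_structure}, every $m \times n$ ES configuration with $m,n>2$ has precisely the structure displayed in \eqref{eq:ES_2d_structure}, so the only freedom lies in the $MN$ pairs of adjacent $*$-lots, each contributing either an $L$-pair (namely $01$, i.e.\ left-empty) or an $R$-pair ($10$, right-empty). Recording the corresponding letter in each of the $M \times N$ cells gives an injection from the set of ES configurations into the set of $M \times N$ grids over $\{L,R\}$. The plan is to identify the image of this injection by characterising, in terms of the $L/R$-grid, when the reconstructed configuration is genuinely ES, i.e.\ permissible, maximal, predator-resistant and altruist-resistant.

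The first step is to observe that permissibility and predator-resistance hold automatically, for every choice of $L/R$-grid. Every empty lot of the structure, whether a $0$ in a fixed $101\ldots 101$ row or the empty half of a pair in a flexible row, has all three of its east, west and south neighbours forced to be occupied by the structure itself (either the partner of the pair, the next forced $1$ in the flexible row, a forced $1$ in the adjacent fixed row, or the fully bricked-up bottom row). This immediately kills a predator's sunlight there. Permissibility is equally direct: every occupied lot has at least one empty neighbour to the east, west or south, or sits on the east, west or south boundary.

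The four forbidden $L/R$-patterns then materialise out of altruist-resistance. Split the empty lots into two families. An empty lot inside a pair of a flexible row is always altruist-rejecting, since building there immediately blocks the partner of the pair, whose only sun source was precisely this empty lot; such positions impose no constraint on the $L/R$-grid. The remaining empty lots sit in fixed rows at columns $3k-1$ for $k = 1, \ldots, N+1$. For the bottommost fixed row (row $m-1$), the altruist is always blocked outright by the surrounding houses, whose south neighbours lie in the all-$1$ bottom row; this row therefore contributes no constraint. For every other fixed row --- one which sits between two flexible rows corresponding to adjacent $L/R$-grid rows $\alpha$ and $\alpha+1$ --- a direct case analysis on $k$ is carried out. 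When $k$ is interior ($2 \le k \le N$), the altruist succeeds if and only if the letter in row $\alpha+1$ at column $k-1$ is $R$ (otherwise the western $1$-neighbour loses its only remaining sun source), the letter in row $\alpha+1$ at column $k$ is $L$ (analogously for the eastern neighbour), and the disjunction ``row $\alpha$ column $k-1$ is $R$, or row $\alpha$ column $k$ is $L$'' holds (otherwise the northern neighbour loses sun). Negating this gives exactly the two forbidden constellations $\bigl(\begin{smallmatrix}R & * \\ R & L\end{smallmatrix}\bigr)$ and $\bigl(\begin{smallmatrix}* & L \\ R & L\end{smallmatrix}\bigr)$. For $k = 1$ the westward side-condition collapses (column $1$ is bricked up, so the westmost $1$-neighbour always has boundary sun), and the combined requirement reduces to ``$L$ at column $1$ in both rows $\alpha$ and $\alpha+1$''; by the east--west involution of Remark~\ref{rm:symmetry}, the case $k = N+1$ yields ``$R$ at column $N$ in both rows $\alpha$ and $\alpha+1$''. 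These are the third and fourth forbidden constellations.

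Putting all the above together, the map from ES configurations to $M \times N$ grids over $\{L, R\}$ avoiding the four stated forbidden constellations is a bijection, which proves the theorem. The delicate step is the interior case $2 \le k \le N$: the constraint guarding the northern neighbour is a disjunction whose negation, once combined with the two conjunctive constraints from the side neighbours, splits into two separate forbidden patterns (one for each disjunct) rather than a single one. All the other cases then reduce to this one either by boundary considerations or by the horizontal mirror symmetry of Remark~\ref{rm:symmetry}.
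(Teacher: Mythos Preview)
Your proof is correct and follows essentially the same approach as the paper. Both arguments set up the natural correspondence between ES configurations and $M\times N$ grids over $\{L,R\}$, observe that permissibility, maximality and predator-resistance come for free from the structure of Theorem~\ref{tm:ES_2d_structure}, and then reduce everything to a case analysis of altruist-resistance at the empty lots. The only organisational difference is that the paper first writes down the four $L/R$-patterns, translates them to $0/1$-constellations and verifies both directions, whereas you \emph{derive} the patterns directly from the altruist condition via the interior/boundary case split on $k$; the underlying computations are the same.
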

\begin{remark}
	Obtaining a closed formula for the number of these two-dimensional $L/R$-configurations (if one even exists) seems to be prohibitively difficult. It is, perhaps, worth mentioning here that it is known to be computationally undecidable whether arbitrarily large $M\times N$ configurations even exist, given a prescribed set of forbidden patterns \cite{Berger66,Robinson71}.
\end{remark}
\begin{proof}
	Let us first consider the translations of the forbidden $L/R$-constellations above into $0/1$-constellations of occupied and empty lots:
	\begin{gather*}
		\begin{array}{ccccccc}
			1&1&0&1&*&*&1\\
			0&1&1&\underline{0}&1&1&0\\
			1&1&0&1&0&1&1\\
			0&1&1&0&1&1&0\\
		\end{array}\quad, \quad \begin{array}{ccccccc}
			1&*&*&1&0&1&1\\
			0&1&1&\underline{0}&1&1&0\\
			1&1&0&1&0&1&1\\
			0&1&1&0&1&1&0\\
		\end{array}\quad,\\
		\begin{array}{cccccc}
			|&1&1&0&1&1\\
			|&1&\underline{0}&1&1&0\\
			|&1&1&0&1&1\\
			|&1&0&1&1&0\\
		\end{array}\quad, \quad \begin{array}{cccccc}
			1&1&0&1&1&|\\
			0&1&1&\underline{0}&1&|\\
			1&1&0&1&1&|\\
			0&1&1&0&1&|\\
		\end{array}\quad .
	\end{gather*}
	It is not hard to check that the indicated empty lot in each of the four constellations marks the spot which invite an altruist to occupy it. This is why they are forbidden in an ES configuration.

	We now show that any configuration with the structure specified in Theorem \ref{tm:ES_2d_structure}, in which none of the $0/1$-constellations above appears, is indeed an ES configuration. Clearly, any such configuration is maximal (and permissible). It is not hard to see that it must, also, be resistant to predators.

	The empty lots in the odd indexed rows (one of the two adjacent $*$'s) of such a configuration are not inviting to altruists, as they represent the only source of light to the occupied lot represented by the other $*$. The two possibilities being:
	$$\begin{array}{cccc}
		1&\underline{0}&\underline{1}&1\\
		0&1&1&0\\
	\end{array} \quad\text{ or }\quad \begin{array}{cccc}
		1&\underline{1}&\underline{0}&1\\
		0&1&1&0\\
	\end{array}$$
	where the indicated lots are corresponding to $*$'s.

	Next, the interior empty lots in the even indexed rows (indicated in the constellation below):
	$$\begin{array}{ccccc}
		*&*&1&*&*\\
		1&1&\underline{0}&1&1\\
		*&*&1&*&*\\
	\end{array}$$
	are not inviting to altruists either, as the only way they could be is if the situation is actually
	$$\begin{array}{ccccc}
		1&0&1&*&*\\
		1&1&\underline{0}&1&1\\
		1&0&1&0&1\\
	\end{array} \quad\text{ or }\quad
	\begin{array}{ccccc}
		*&*&1&0&1\\
		1&1&\underline{0}&1&1\\
		1&0&1&0&1\\
	\end{array}$$
	which is forbidden by the two first patterns.

	We can reason similarly for the boundary empty lots in the even indexed rows (indicated in the constellations below):
	$$\begin{array}{ccccc}
		|&1&1&*&*\\
		|&1&\underline{0}&1&1\\
		|&1&1&*&*\\
	\end{array} \quad\text{ and }\quad \begin{array}{ccccc}
		*&*&1&1&|\\
		1&1&\underline{0}&1&|\\
		*&*&1&1&|\\
	\end{array}\quad.$$
	They would be inviting to altruists only if the actual situation were
		$$\begin{array}{ccccc}
		|&1&1&0&1\\
		|&1&\underline{0}&1&1\\
		|&1&1&0&1\\
	\end{array} \quad\text{ and }\quad \begin{array}{ccccc}
		1&0&1&1&|\\
		1&1&\underline{0}&1&|\\
		1&0&1&1&|\\
	\end{array}$$
	both of which are forbidden by the last two patterns.
\end{proof}

\section{Concluding remarks}\label{sec:concluding}

In this paper, we borrow a few concepts from the evolutionary game theory
to model (some) effects of a breakdown of social order under the
assumption of full rationality of agents. In particular, we consider a toy
model of urbanistic development known as the Riviera model and investigate
its robustness against two types of degradation by asocial actors.
One type of actor is characterized by predatory behavior, willing to harm
others, but not themselves; the other type is, in a sense, altruistic,
unwilling to harm others, but ready to accept sub-standard outcomes for
themselves. Invasions of either type are detrimental to law-abiding and
rules-respecting actors, who build communities cooperating within a given
set of rules. The main objects of our study are configurations that
have naturally evolved (following the rules) to a jammed (or a saturated)
state in which no further legal building is possible. We are interested
in characterizing and enumerating configurations that do not
require any external enforcement to resist illegal invasions, simply by
being unattractive to potential invaders, while still acceptable
to legal and rules-respecting owners. We call such configurations
evolutionary stable, indicating thus their robustness against both
considered types of invasions by non-cooperative players.
Our findings are quantified via the complexity functions of the maximal
configurations resistant to either type of degradation and to both of them.
Besides the complexity functions, we also find the bivariate generating
functions for the sequences enumerating all three types of resistant
configurations and determine their asymptotic behavior. In the end, we
revisit the original full-dimensional model and determine the main structural
properties of its configurations resistant to both types of degradation.

Our results pave the way toward modeling similar problems in more realistic
settings. They could have potential applications also in the context of
designing solar power plants, wireless energy transfer, and in all other
situations where unobstructed direct access to a spatially fixed resource
is crucial. A natural next step would be to search for, characterize, and
enumerate robust configurations on other types of lattices and in their
higher-dimensional analogons. The model could also be refined by introducing
additional rules, and by introducing various numerical parameters, leading
thus to more realistic results.

\section*{Acknowledgments}
\noindent
Partial support of the Slovenian ARIS (program P1-0383, grant no. J1-3002)
is gratefully acknowledged by T. Do\v{s}li\'c.

\bibliographystyle{bababbrv-fl}
\bibliography{literature}

\end{document}